\newtheorem{theorem}{Theorem}[]
\newtheorem{proposition}[theorem]{Proposition}
\newtheorem{lemma}[theorem]{Lemma}
\theoremstyle{definition}
\author{Kasper Szabo Lyngsie\affiliationmark{1}}
\title{On neighbour sum-distinguishing \{0,1\}-weightings of bipartite graphs}
\affiliation{
  Technical University of Denmark, Denmark}
\keywords{1-2-3-Conjecture, neighbour-sum-distinguishing edge-weightings, bipartite graphs }
\begin{document}
\publicationdetails{20}{2018}{1}{21}{2632}
\maketitle
\begin{abstract}
   Let $S \subset \mathbb{Z}$ be a set of integers. A graph $G$ is said to have the \emph{$S$-property} if there exists an $S$-edge-weighting $w: E(G) \rightarrow S$ such that any two adjacent vertices have different sums of incident edge-weights. In this paper we characterise all bridgeless bipartite graphs and all trees without the \{0,1\}-property. In particular this problem belongs to \textsf{P} for these graphs while it is \textsf{NP}-complete for all graphs. 
\end{abstract}

\section{Introduction}
The problems investigated in this paper are highly related to the well-known \emph{1,2,3-Conjecture} formulated in~\cite{KaLuTh}. One way to approach this conjecture (see for example \cite{kha}) has been to study the \textit{$\{a,b\}$-property} of graphs for two integers $a$ and $b$ defined in the following way: a graph $G$ is said to have the $\{a,b\}$-property if there exists a mapping $w: E(G) \rightarrow \{a,b\}$ such that for all pairs of adjacent vertices $u$ and $v$ we have $\sum_{e \in E(v)}w(e) \neq \sum_{e \in E(u)}w(e)$, where $E(v)$ and $E(u)$ denote the edges incident to $v$ and $u$ respectively. We call $w$ a \textit{neighbour sum-distinguishing edge-weighting} of $G$ with weights $a$ and $b$.  \\

\noindent In~\cite{Lu} Lu investigated the problem of determining whether or not a given bipartite graph has the $\{0,1\}$- or the $\{1,2\}$-property.  The restriction to bipartite graphs was motivated by a result by Dudek and Wajc~\cite{DuWa} saying that the problem is \textsf{NP}-complete for general graphs. In particular Lu asked the natural question whether the problem is polynomial if only bipartite graphs are considered (Problem 1 in \cite{Lu}). The results of the present paper answer in the affirmative for bridgeless bipartite graphs and trees. Lu also proved the following theorem:
\begin{theorem}\emph{~\cite{Lu}} \label{thm:LU}
Every $2$-connected and $3$-edge-connected bipartite graph has the $\{0,1\}$- and the $\{1,2\}$-property.
\end{theorem}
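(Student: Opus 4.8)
The plan is to turn both statements into one subgraph-selection problem and solve it with a parity ($T$-join) argument, using the connectivity hypotheses only to handle a single leftover case.

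First I would reformulate. Fixing the bipartition $V(G)=X\cup Y$, a $\{0,1\}$-weighting is the same as a choice of a subgraph $H\subseteq E(G)$ (its weight-$1$ edges), and the sum at a vertex $v$ is $d_H(v)$; a $\{1,2\}$-weighting is a choice of $H$ (its weight-$2$ edges) with sum at $v$ equal to $d_G(v)+d_H(v)$. In either case it is enough to pick $H$ so that the relevant vertex function $\phi_H$ (that is, $d_H$, respectively $d_G+d_H$) takes values of different \emph{parity} on the two endpoints of every edge; because $G$ is bipartite this is equivalent to asking that the set $T=\{v:d_H(v)\text{ odd}\}$ be exactly one prescribed set. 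For the $\{0,1\}$-property one may take $T=Y$ (or $T=X$); for the $\{1,2\}$-property one takes $T=\{x\in X:d_G(x)\text{ even}\}\cup\{y\in Y:d_G(y)\text{ odd}\}$ (or the set obtained by exchanging the roles of $X$ and $Y$). A standard fact is that in a connected graph a subgraph $H$ with $d_H(v)$ odd precisely on a given set $T$ exists if and only if $|T|$ is even, and using $\sum_{x\in X}d_G(x)=|E(G)|=\sum_{y\in Y}d_G(y)$ one checks that in each of these prescriptions $|T|$ is congruent to $|X|$ or to $|Y|$ modulo $2$.

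Consequently, if $|X|$ or $|Y|$ is even, both properties follow at once, and this part of the argument uses only that $G$ is connected. All the difficulty lies in the case where $|X|$ and $|Y|$ are \emph{both odd}; then every admissible $T$ above is odd, so no choice of $H$ gives the desired parity everywhere, and any $H$ we take must have at least one ``defect'' vertex $v^{\ast}$ at which $\phi_H$ has the wrong parity. This is where I would bring in $2$-connectedness and $3$-edge-connectedness. The idea: choose a convenient defect vertex $v^{\ast}$, build $H$ as a $T'$-join with $T'=T\triangle\{v^{\ast}\}$ (now of even size) so that $\phi_H$ has the correct parity everywhere except at $v^{\ast}$, and then use $\delta(G)\ge 3$ together with the three edge-disjoint paths between any two vertices (Menger) to reroute the join so as to control the exact value $\phi_H(v^{\ast})$ and the values of $\phi_H$ on the neighbours of $v^{\ast}$, making $\phi_H(v^{\ast})$ distinct from every $\phi_H(y)$ with $y\sim v^{\ast}$ (for instance pushing $\phi_H(v^{\ast})$ to an extreme value $0$ or $d_G(v^{\ast})$ and keeping the neighbours away from it, or modifying a single neighbour's value by $\pm 2$ along a cycle through it). Taking an open ear decomposition of $G$, choosing $v^{\ast}$ on the initial cycle, and performing the surgery along that cycle is how I would organise this.

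The step I expect to be the real obstacle is precisely this local correction at the defect vertex: showing that the extra edge-connectivity always provides enough room to separate $\phi_H(v^{\ast})$ from all the values at its neighbours, uniformly over the possible values of $d_G(v^{\ast})$ and over the parity pattern forced on $N(v^{\ast})$. I would expect to split it into a few subcases according to $d_G(v^{\ast})$ and according to whether $X$ or $Y$ contains a vertex of odd degree, treating each by an explicit short rerouting. The $\{1,2\}$-property is proved along exactly the same lines, with the only difference that the targets $T$ and the defect analysis are phrased in terms of the shifted sums $d_G(v)+d_H(v)$.
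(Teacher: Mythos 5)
The paper does not reprove this theorem (it is quoted from Lu), but the machinery it would use is spelled out in the remark following Lemma~\ref{lem:remove-edges}, and measuring your sketch against that reveals where it falls short. Your reduction is correct and standard: encoding a weighting by its set $H$ of weight-$1$ (resp.\ weight-$2$) edges, reducing to a prescribed odd-degree set $T$, checking $|T|\equiv|X|$ or $|Y|\pmod 2$, and disposing of the case where a bipartition class has even size using only connectedness (this is exactly Lemma~\ref{lem:f-factor} and the remark after it). The problem is that the entire content of the theorem lives in the case you defer --- both classes odd --- and there your plan stops at the point where the proof actually begins. You yourself flag the ``local correction at the defect vertex'' as the real obstacle, and the rerouting you propose (pushing $\phi_H(v^{\ast})$ to an extreme value, or changing one neighbour's value by $\pm2$ along a cycle) does not close it: swapping weights along a cycle changes the value at \emph{every} vertex of that cycle by $0$ or $\pm2$, so it can create new conflicts far from $v^{\ast}$, and ``keeping the neighbours away from'' the extreme value is exactly the assertion that needs proof, not a step in it.

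The two missing ideas are concrete. First, the defect vertex must be chosen to have (locally) \emph{maximum degree}; then weighting all its incident edges $1$ gives it weighted degree $d(v^{\ast})$, which automatically exceeds the weighted degree of every neighbour of strictly smaller degree, so no value-by-value case analysis over $d_G(v^{\ast})$ is needed. Second, neighbours of the \emph{same} degree as $v^{\ast}$ must each lose one incident edge to weight $0$, and one must be able to delete one such edge at each of them \emph{simultaneously while keeping the remaining graph connected} so that Lemma~\ref{lem:f-factor} still applies; this is precisely Lemma~\ref{lem:remove-edges}, a nontrivial statement with an exceptional configuration ($|A|=q=4$, all degrees $3$, six components) that has to be ruled out or treated separately. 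The hypotheses of the theorem enter exactly here: $3$-edge-connectedness gives minimum degree $3$ and the absence of bridges required by Lemma~\ref{lem:remove-edges}, and $2$-connectedness lets one take $v^{\ast}$ to be a non-cutvertex. An ear decomposition and Menger's theorem, as you invoke them, do not by themselves produce this simultaneous edge deletion. So the proposal is a correct setup with the decisive step missing rather than a proof.
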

\noindent In \cite{skow} Skowronek-Kazi\'{o}w investigated the problem of determining whether a graph has a $\{1,2\}$-edge-weighting such that the following vertex-colouring is proper: for each vertex $v$, assign the product of the edge-weights incident to $v$ as $v$'s colour. This product-property is the same as the $\{0,1\}$-property and Skowronek-Kazi\'{o}w verified this for various classes of bipartite graphs, for example bipartite graphs of minimum degree at least 3. In \cite{skow} Skowronek-Kazi\'{o}w also asked for a characterization of all bipartite graphs, in particular trees, which have such $\{1,2\}$-edge-weightings, that is, which have the $\{0,1\}$-property. As mentioned above the results of the present paper give such a characterization for trees and bridgeless bipartite graphs.\\
\noindent  A bipartite graph without the \{0,1\}-property is said to be \textit{bad}.  \\
Thomassen, Wu and Zhang~\cite{TWZ} gave a complete characterisation of all bipartite graphs without the $\{1,2\}$-property. Any such graph is an \textit{odd multi-cactus} defined as follows:
Take a collection of cycles of length 2 modulo 4, each of which have edges coloured alternately red and green. Then form a connected simple graph by pasting the cycles together, one by one, in a tree-like fashion along green edges. Finally replace every green edge by a multiple edge of any multiplicity. The graph with one edge and two vertices is also called an odd multi-cactus. It can easily be checked that an odd multi-cactus do not have the $\{a,b\}$-property for any $a,b \in \mathbb{Z}$. As mentioned above these graphs characterise the bipartite graphs without the \{1,2\}-property:
\begin{theorem} \label{thm:12oddmul} \emph{~\cite{TWZ}}
$G$ is a connected bipartite graph without the $\{1,2\}$-property if and only if $G$ is an odd multi-cactus.
\end{theorem}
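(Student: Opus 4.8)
\medskip
\noindent\textbf{A proof proposal.} I would prove the two implications by essentially unrelated arguments; write $\sigma(v)=\sum_{e\in E(v)}w(e)$ for the weighted degree of a vertex $v$.

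\emph{If $G$ is an odd multi-cactus then $G$ has no neighbour sum-distinguishing $\{1,2\}$-weighting.} The plan is to encode a hypothetical such weighting $w$ of $G$ by an auxiliary graph $\mathcal H$ whose vertices are the bundles of parallel green edges of $G$ and whose edges are the red edges of $G$: a red edge $f$ is made to join the two green bundles that share an endpoint with $f$ inside the cycle of the cactus decomposition containing $f$. Since each such cycle has length $2$ modulo $4$, its red edges form a cycle of \emph{odd} length in $\mathcal H$, and since the cycles are pasted together in a tree-like way along green edges, $\mathcal H$ is a connected cactus every cycle of which is odd. The key observation is that at a vertex $g=xy$ of $\mathcal H$ the weight of the common green bundle $xy$ cancels from $\sigma(x)-\sigma(y)$, so that $\sigma(x)\neq\sigma(y)$ becomes the purely local requirement that the red edges incident to $g$ on the ``$x$-side'' do not have the same total weight as those on the ``$y$-side''. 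For a single odd cycle this says consecutive edges get distinct weights, i.e.\ demands a proper $2$-edge-colouring of an odd cycle --- impossible. In general I would peel leaf cycles of $\mathcal H$ off one by one: the local requirements at the internal vertices of a leaf cycle force, because the cycle is odd, its two edges at the attaching cut vertex to carry equal weight, so the cycle contributes the same to both sides of the requirement there and may be deleted; induction on the number of cycles (base case a single cycle, and $K_2$, where $\sigma$ is constant, treated separately) then yields the contradiction. The same argument works for any two-element weight set, which is why odd multi-cacti have no $\{a,b\}$-property at all.

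\emph{If $G$ is connected bipartite with no $\{1,2\}$-weighting then $G$ is an odd multi-cactus.} Here I would take a counterexample $G$ with $|E(G)|$ minimum and derive a contradiction. First $G\neq K_2$, since $K_2$ is an odd multi-cactus, and $\delta(G)\ge 2$: a vertex $v$ of degree $1$ with neighbour $u$ would let one weight $G-v$ --- which is smaller, hence has the $\{1,2\}$-property or is an odd multi-cactus, and in the latter case admits (by the construction above) a weighting leaving a single adjacent pair undistinguished, which can be taken to contain $u$ --- and then choose $w(uv)\in\{1,2\}$ to repair it, giving $G$ the $\{1,2\}$-property; but $G$ is itself not an odd multi-cactus, since those other than $K_2$ have minimum degree at least $2$, a contradiction. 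Now by Theorem~\ref{thm:LU}, $G$ is not both $2$-connected and $3$-edge-connected, so it has a cut vertex or an edge cut of size at most $2$. I would split $G$ along such a cut --- at the cut vertex, or by deleting and suitably rerouting the at most two cut edges --- into one or two strictly smaller connected bipartite graphs, apply the induction hypothesis to each, and recombine: if all the pieces have the $\{1,2\}$-property, then after adjusting the weights near the cut to avoid the one or two collisions that can arise there one gets a $\{1,2\}$-weighting of $G$, a contradiction; and if some piece is an odd multi-cactus, one checks that this is consistent with $G$ having no $\{1,2\}$-weighting only if all the pieces are cycles of length $2$ modulo $4$ with the red/green pattern, pasted along green edges exactly as in the definition, so that $G$ itself is an odd multi-cactus, again a contradiction.

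\emph{Where the difficulty lies.} Once the auxiliary cactus $\mathcal H$ is in place, the first implication is a short induction; the work is almost entirely in the second implication, and within it in the recombination step. A smaller piece with the $\{1,2\}$-property need not admit a weighting \emph{compatible} with the behaviour forced at the cut, while a smaller piece that is an odd multi-cactus has no $\{1,2\}$-weighting at all, so one must argue either that its unavoidable single defect can be steered to be cancelled at the cut, or else that $G$ is genuinely an odd multi-cactus. Controlling this case analysis --- including the parity bookkeeping needed to reroute a $2$-edge-cut while staying bipartite and strictly smaller, and the final verification that cycles of length $2$ modulo $4$ pasted along green edges are the only obstructions that survive the reductions --- is the main obstacle.
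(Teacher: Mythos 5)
This statement is quoted from~\cite{TWZ}; the paper you are reading does not prove it, so there is no in-paper proof to compare against line by line. What the paper does contain is (i) the one-line remark that ``it can easily be checked that an odd multi-cactus does not have the $\{a,b\}$-property for any $a,b$'', and (ii) a proof of Theorem~\ref{Thm:1} that explicitly ``follows the same approach as the proof of Theorem~\ref{thm:12oddmul} in~\cite{TWZ}'', which therefore reveals what the real argument for the hard direction looks like. Your first implication is fine: the auxiliary cactus $\mathcal H$ on green bundles is well defined (every vertex of an odd multi-cactus lies in exactly one green bundle, each red edge in exactly one constituent cycle, and each constituent cycle of length $4k+2$ contributes an odd cycle of $2k+1$ red edges to $\mathcal H$), the green-bundle weight does cancel from $\sigma(x)-\sigma(y)$, and the leaf-cycle peeling correctly shows the green constraints alone are unsatisfiable. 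This is a clean formalisation of the direction the paper dismisses as easy.

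The genuine gap is in the converse, which is the entire content of the theorem. After invoking Theorem~\ref{thm:LU} you reduce to a cut vertex or a $2$-edge-cut and then write ``after adjusting the weights near the cut \dots one gets a $\{1,2\}$-weighting'' and ``one checks that this is consistent \dots only if all the pieces are cycles of length $2$ modulo $4$ pasted along green edges''. That recombination is not a step of the proof; it \emph{is} the proof, and as stated it does not go through. Concretely: a piece that is an odd multi-cactus admits no proper weighting at all, so you must classify which weightings with a single prescribed defect, or with a prescribed forced sum at the cut, it admits (this is exactly the role played by Lemmas~\ref{lem:1} and~\ref{lem:2} in the $\{0,1\}$ analogue); splitting a $2$-edge-cut does not in general produce two strictly smaller connected bipartite graphs whose proper weightings can be glued, because the sums at the four cut-endpoints interact and the parity of the bipartition classes of the pieces matters; and even your degree-$1$ reduction fails as written, since the two available values of $w(uv)$ can both be excluded when $u$ has neighbours $z_1,z_2$ with $\sigma(z_1)=\sigma(u)+1$ and $\sigma(z_2)=\sigma(u)+2$. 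The actual argument in~\cite{TWZ} (mirrored in Section~\ref{sec1} here) is not a cut decomposition at all: it reduces to both bipartition classes odd, uses $f$-factors modulo~$2$ (Lemma~\ref{lem:f-factor}) together with Lemma~\ref{lem:remove-edges} to weight around a vertex of local maximum degree, eliminates suspended paths of lengths $2$, $4$ and $\geq 5$ in a minimum counterexample, contracts the surviving length-$3$ suspended paths, and then analyses endblocks of the contracted graph. None of that machinery is replaceable by the sentence ``one checks''.
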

\noindent Since an odd multi-cactus is recognisable in polynomial-time, this answers the part of Lu's problem from~\cite{Lu} concerning the $\{1,2\}$-property. As pointed out in~\cite{TWZ}, Theorem~\ref{thm:12oddmul} extends to all positive edge-weights $a$ and $b$ of distinct parity but not to the edge-weights $0$ and $1$. In~\cite{TWZ} it is also remarked that any bipartite graph of minimum degree at least 3 has the $\{a,b\}$-property for all pairs of non-negative integers $a,b$ of distinct parity. Thus it remains open to characterise those bipartite graphs with cut-vertices and minimum degree at most 2 which do not have the \{0,1\}-property. \\  
In~\cite{Lu}, Lu gave the following example of a bad graph with the $\{1,2\}$-property: Two 6-cycles connected by a path of length 3 and, as noted in~\cite{TWZ}, we can construct an infinite number bad graphs with the $\{1,2\}$-property by the following procedure: Take two graphs without the $\{0,1\}$-property and join them by a path of length 3 modulo 4. We can even generalise this procedure further: Let $s \geq 0$ be an integer and let $P$ be a path of length 1 modulo 4. Join each intermediate vertex in $P$ to $s$ bad graphs by $s$ edges, and join the end-vertices of $P$ to $s+1$ bad graphs (see Figure~\ref{fig:badex1}). This will create a new bad graph with the $\{1,2\}$-property. 
\begin{figure}[H]
\centering
\includegraphics[scale=0.8]{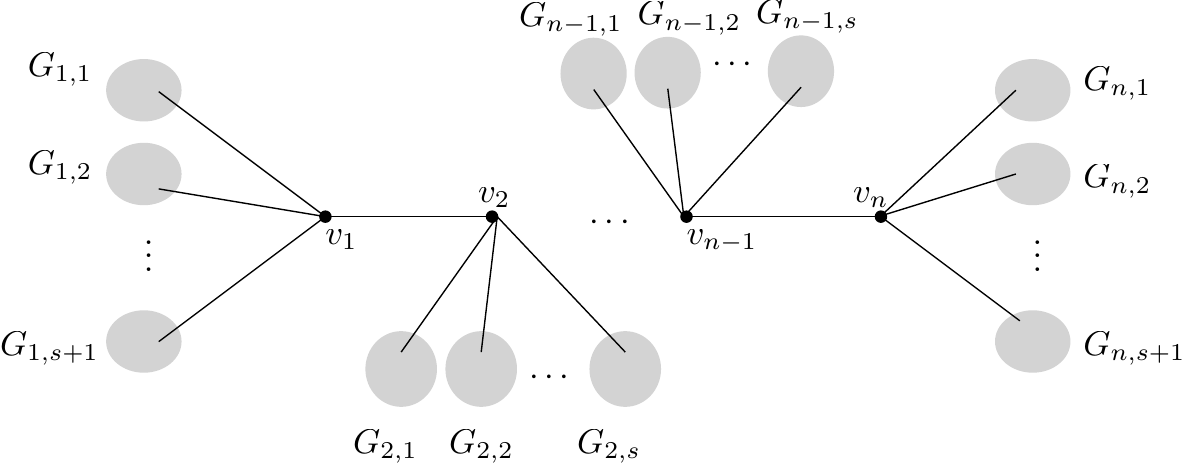} 
\caption{A construction of bad graphs with the \{1,2\}-property.}
\label{fig:badex1}
\end{figure}

\noindent Although the preceding paragraph shows a large class of bipartite graphs without the \{0,1\}-property, the list is still not complete. Not even for trees, as demonstrated by the tree in Figure~\ref{fig:smallbadtree}.
\begin{figure}[H] 
\centering 
\includegraphics[scale=0.8]{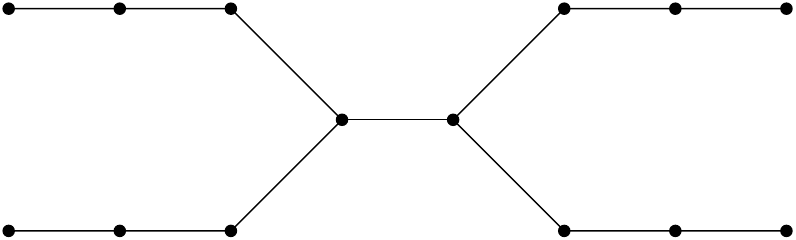} 
\caption{A tree without the \{0,1\}-property.}
\label{fig:smallbadtree}
\end{figure} 
\noindent Thus there is a large class of bad graphs which are not odd multi-cacti and it seems that the \{0,1\}-property is very different from the \{1,2\}-property. However, note that the above procedure always create bridges. This gives the hint that the \{0,1\}-property and the \{1,2\}-property might behave in a similar way if we don't allow bridges. This is indeed true as we prove in Section~\ref{sec1}: 
\begin{theorem} \label{Thm:1}
$G$ is a connected bridgeless bipartite graph without the $\{0,1\}$-property if and only if $G$ is an odd multi-cactus.
\end{theorem}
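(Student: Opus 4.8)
The plan is to prove the two implications of the theorem separately. The direction that an odd multi-cactus $G$ has no $\{0,1\}$-property is essentially the assertion already made in the introduction (odd multi-cacti have no $\{a,b\}$-property for any $a,b\in\mathbb{Z}$), and I would spell it out by induction on the number of cycles used to build $G$: the base cases---a single edge, and more generally the multigraph $K_2^{(m)}$ on two vertices with $m\ge 1$ parallel edges, which is a digon with a multiplied green edge---are immediate since both endpoints always receive equal sums, and for the inductive step one takes a leaf cycle $C$ of length $\equiv 2\pmod 4$ glued to the rest along a green multiedge, observes that in any $\{0,1\}$-weighting the telescoping identity around $C$ together with the alternating red/green pattern and the parity $|C|\equiv 2\pmod 4$ forces two adjacent vertices of $C$ to have equal sums, and otherwise the conflict is pushed onto the gluing vertex, where the inductive hypothesis on the rest of the cactus finishes it.

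For the converse I would prove the contrapositive: every connected, bridgeless, bipartite graph $G$ that is not an odd multi-cactus has the $\{0,1\}$-property. Since $0$ and $1$ have distinct parity, the remark quoted from~\cite{TWZ} in the introduction already handles the case of minimum degree at least $3$, so I may assume $G$ has a vertex of degree exactly $2$ (being bridgeless, it has none of degree at most $1$). The proof is by induction---on $|E(G)|$, after first shortening any long chain of degree-$2$ vertices two vertices at a time (which preserves bipartiteness and bridgelessness)---and, so as to be able to splice partial weightings together, I would carry a stronger statement that also records some flexibility of the weighting at a prescribed vertex. It is convenient to note at the outset that every odd multi-cactus is either $K_2$ or $2$-connected, since pasting $2$-connected graphs along an edge preserves $2$-connectedness; hence a bridgeless graph with a cut-vertex is automatically not an odd multi-cactus. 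The case where $G$ is $2$-connected and $3$-edge-connected is disposed of at once by Theorem~\ref{thm:LU}, and the even cycles and multiedges form the base cases (note that $C_{4k+2}$ is itself an odd multi-cactus, hence not a counterexample, whereas $C_{4k}$ is easily weighted by hand).

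The induction step uses two reductions. If $G$ has a cut-vertex $v$, I split off a leaf block $B$ of the block tree, so that $G=B\cup G'$ with $B\cap G'=\{v\}$ and $\deg_G(v)=\deg_B(v)+\deg_{G'}(v)\ge 4$; I weight $B$ and $G'$ separately (by the induction hypothesis, or, when one of them is itself an odd multi-cactus, by a weighting of that odd multi-cactus which is neighbour-sum-distinguishing away from $v$ and has slack at $v$) and then combine them, using the room at $v$ to pick a common value of $s_G(v)=s_B(v)+s_{G'}(v)$ that avoids the (at most $\deg_G(v)$) neighbour sums on both sides. If instead $G$ is $2$-connected, then by Theorem~\ref{thm:LU} it is not $3$-edge-connected and so has a $2$-edge-cut $\{e_1,e_2\}$ (a degree-$2$ vertex always supplies one); I split $G$ along the cut into $G[V_1]$ and $G[V_2]$, augment each side by a short path of the appropriate parity joining its two cut-endpoints so that the augmented side stays bridgeless and bipartite, weight each augmented side by the induction hypothesis, and reassemble by choosing $w(e_1),w(e_2)\in\{0,1\}$ so that the (up to four) cut-endpoints avoid all of their neighbour conflicts; the recorded flexibility is exactly what makes this last step go through.

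The main obstacle is that these reductions can fail to make progress: the augmented side can come out isomorphic to $G$, or can itself be an odd multi-cactus (for instance, shortening a chain may turn $C_8$ into $C_6$), and then neither the induction hypothesis nor the odd-multi-cactus flexibility lemma applies with enough room. The heart of the proof is therefore a structural analysis of these rigid configurations: one must show that if, for every cut-vertex and every $2$-edge-cut of $G$, both reductions get stuck, then the forced local patterns---alternating behaviour along the cycles of length $\equiv 2\pmod 4$, together with the multiplicities on the ``green'' two-edge cuts---can be assembled into a presentation of $G$ itself as an odd multi-cactus, contradicting the hypothesis. Equivalently, the bridgeless bipartite graphs for which every two-edge-cut splitting is obstructed from both sides are precisely the odd multi-cacti; establishing this classification, along with the accompanying flexibility lemmas, is where essentially all of the work lies, and this part of the argument should run closely parallel to the analysis of Thomassen, Wu and Zhang behind Theorem~\ref{thm:12oddmul}.
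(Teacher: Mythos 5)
There is a genuine gap: your argument for the hard direction is a reduction scheme whose decisive step is explicitly left undone. You reduce to the situation where every cut-vertex splice and every $2$-edge-cut splitting is ``obstructed'', and then assert that such rigid graphs ``can be assembled into a presentation of $G$ itself as an odd multi-cactus'', remarking that this classification ``is where essentially all of the work lies''. That classification \emph{is} the theorem; deferring it to an unspecified structural analysis parallel to Thomassen--Wu--Zhang leaves the proof without its core. The paper does not argue by $2$-edge-cut splitting at all: it takes a minimal counterexample, uses Lemma~\ref{lem:f-factor} to push all parity conflicts onto the neighbourhood of a single chosen vertex, eliminates suspended paths and cycles of lengths $2$, $4$ and $\ge 5$ by explicit reweighting arguments (Claims 1--3), contracts the remaining length-$3$ suspended paths, and then runs a degree analysis in an endblock via Lemma~\ref{lem:remove-edges} (Claim 4) before handing off to the argument of Theorem~\ref{thm:12oddmul}.

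Independently of that, the splicing steps you do describe rest on flexibility claims that are neither formulated nor true in the generality you need. At a cut-vertex $v$ you want to realise enough distinct values of $s_B(v)+s_{G'}(v)$, with both sides proper away from $v$, to dodge up to $\deg_G(v)$ neighbour sums; for $\{0,1\}$-weightings such freedom is typically absent --- Section~3 of the paper is devoted to graphs where the weighted degree of a prescribed vertex is \emph{forced} in every proper weighting, and the odd multi-cactus pieces you want to splice in admit no proper weighting at all, only the very specific relaxations recorded in Lemmas~\ref{lem:1} and~\ref{lem:2}, which you would have to state and prove. Likewise, in the $2$-edge-cut step the augmented sides can be odd multi-cacti or fail to shrink, as you note, and no mechanism is given for handling this. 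The standard way around both problems is the parity argument via Lemma~\ref{lem:f-factor} (make one bipartition class odd and the other even, so that conflicts survive only near one exceptional vertex), which your proposal never invokes. The easy direction (odd multi-cacti are bad) is fine in outline and matches the paper's remark.
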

\noindent As mentioned after Theorem~\ref{thm:12oddmul}, an odd multi-cactus is recognisable in polynomial time so this answers the part of Lu's problem from~\cite{Lu} concerning the \{0,1\}-property for bridgeless bipartite graphs. In Section~\ref{sec2} we provide additional operations for constructing trees without the $\{0,1\}$-property. The class of trees without the $\{0,1\}$-property we can obtain using these operations we call $\mathcal{B}$ and these are all recognisable in polynomial time. Whilst the class $\mathcal{B}$ is difficult to describe we show that this gives a full characterisation of all bad trees.
\begin{theorem} \label{Thm:2}
A tree $T$ has the \{0,1\}-property unless $T$ is a member of $\mathcal{B}$.
\end{theorem}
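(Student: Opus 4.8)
The plan is to establish the non-trivial direction: every tree $T \notin \mathcal{B}$ possesses a neighbour-sum-distinguishing $\{0,1\}$-edge-weighting. The reverse inclusion — that every member of $\mathcal{B}$ is bad — will already have been secured when the operations generating $\mathcal{B}$ are introduced in Section~\ref{sec2}, since each such operation is designed to preserve the absence of the $\{0,1\}$-property; so the content of the theorem is really that these operations generate \emph{all} bad trees. I would argue by contradiction: let $T$ be a bad tree with $|V(T)|$ minimum subject to $T \notin \mathcal{B}$.

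The engine is a bottom-up analysis of a rooted version of $T$. Fix a root $r$; for a vertex $v$ with parent edge $e_v = uv$, associate to the subtree $T_v$ hanging below $v$ its \emph{profile}: for each possible weight $a \in \{0,1\}$ of $e_v$, the set $S_v(a) \subseteq \{0,1,\dots,\deg_{T_v}(v)\}$ of incident-sums at $v$ realisable by a $\{0,1\}$-weighting of the edges of $T_v$ that already resolves every adjacency strictly inside $T_v$. Two facts drive the proof: (i) only finitely many profile \emph{types} are relevant — roughly ``$S_v(0)$ and $S_v(1)$ are both rich enough to dodge any single forbidden value'', ``exactly one of them is a forced singleton'', ``both are forced, with $|S_v(0)| = |S_v(1)| = 1$ and the two values consecutive'', and a handful of others; and (ii) the type of $v$ is a function of the multiset of types of its children. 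I would write down this transfer rule explicitly, check it is closed on the finite set of types, and — the crucial point — check that the rigid types propagate upward along exactly the configurations (pendant paths of prescribed length modulo $4$, pendant stars attached in prescribed ways) that the operations defining $\mathcal{B}$ use.

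With the transfer rule in hand, the argument splits on the type computed at $r$. If $r$ receives a flexible type, the dynamic program yields a valid weighting directly: choose child-edge weights realising compatible sums from the root downward, using the flexibility of the type to break the last conflict at $r$; this contradicts $T$ being bad. If $r$ receives a rigid type, one reads off from the chain of local obstructions encountered that $T$ is obtained from smaller bad trees by the construction operations — hence $T \in \mathcal{B}$, the other contradiction. Here the role played by cycles of length $2 \bmod 4$ in the odd-multi-cactus characterisation (Theorem~\ref{thm:12oddmul}) is played by degree-$2$ vertices and leaf-neighbourhoods, which are the only local structures capable of forcing a singleton profile; a convenient preprocessing step is to reduce pendant paths and pendant stars to a canonical short form so that the recursion only ever meets a bounded list of local pictures.

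The step I expect to be the main obstacle is matching the two descriptions of badness exactly: showing that the set of rooted trees on which the profile recursion gets stuck is \emph{precisely} $\mathcal{B}$, not merely contained in some larger, easier-to-describe family. Concretely, every distinct failure pattern of the dynamic program must be put in correspondence with one construction step, and one must verify that the operations neither over-generate (produce a tree that is in fact good) nor under-generate (miss some bad tree) — the former being guarded by the design of the operations, the latter being exactly what the recursion has to certify. Getting the bookkeeping of profile types and their combination rule complete and airtight, especially around vertices of degree $2$ and around how several rigid children interact at one parent, is where essentially all of the work will lie.
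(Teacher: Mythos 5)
Your overall strategy is the same as the paper's: the ``profile types'' you describe are exactly the classes the paper isolates --- the $G_v(-)$-trees (no proper weighting when the sum at $v$ is raised by $1$), the $G_v(s,s+1)$- and $G_v(s,s+3)$-trees (both forced values are singletons), and bad trees --- and the ``transfer rule'' you postulate is exactly the content of Lemmas~\ref{lem:x,x+3tree},~\ref{lem:x,x+1-tree} and~\ref{lem:allbadtrees}, which describe how these rigid types at a vertex $v$ decompose into rigid types of the components of $G-v$. The final argument in the paper is then a minimal-counterexample induction that feeds these decomposition lemmas back into the recursive construction of $\mathcal{B}$.

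The genuine gap is that your proposal defers essentially all of the content to steps you only assert can be done. Three of these are nontrivial and are where the paper's work actually lies. First, you never establish \emph{why} the list of rigid profile types is finite and why the forced values must differ by exactly $1$ or exactly $3$; in the paper this comes from the mod-$2$ $f$-factor lemma (Lemma~\ref{lem:f-factor}), which shows that a subtree's profile is flexible unless very specific parity conditions hold on the sizes of the bipartition classes of the branches at $v$ --- your plan never invokes any such parity tool, and without it the claim that ``only finitely many profile types are relevant'' is unsupported. Second, the transfer rule itself is delicate: Lemma~\ref{lem:x,x+3tree}, for instance, requires showing $n_1=2$ and $n_2=0$ (exactly two forced-singleton children and none of the opposite parity class), which is a genuine argument by constructing competing weightings, not mere bookkeeping. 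Third, the final assembly step is not a root-type dichotomy as you describe but a specific structural move (Lemma~\ref{lem:allbadtrees}): take the fourth-last vertex $v$ of a longest path, after first using Lemma~\ref{lem: degree1vbridge} and Lemma~\ref{lem:degree1allbad} to rule out degree-$1$ vertices adjacent to vertices of degree at least $3$; cutting at the edge $vv'$ exhibits $G$ as built by one of the two operations. You correctly identify that ``matching the two descriptions of badness exactly'' is the main obstacle, but identifying an obstacle is not the same as overcoming it; as written, the proposal is a plan for a proof rather than a proof.
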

\noindent Taken together, Theorems~\ref{Thm:1} and~\ref{Thm:2} show a marked difference between the \{0,1\}-problem and the \{1,2\}-problem. Indeed for bridgeless bipartite graphs Theorems~\ref{Thm:1} and~\ref{thm:12oddmul} show that the class of graphs without the \{0,1\}-property and the class of graphs without the \{1,2\}-property are precisely the same. On the other hand, Theorems~\ref{thm:12oddmul} and~\ref{Thm:2}  show that this is far from the case with trees.

\section{Bridgeless bipartite graphs without the \{0,1\}-property} 
\label{sec1}
Let $G$ be a bipartite graph. A \emph{$\{0,1\}$-weighting} of $G$ is a map $w: E(G) \rightarrow \{0,1\}$. Given a $\{0,1\}$-weighting $w$ of $G$ and a vertex $v$ of $G$ we call the sum $\sum_{e \in E(v)}w(e)$ the \textit{weighted degree} of $v$ or the induced \textit{colour} of $v$ (induced by $w$). For convenience the weighted degree of a vertex $v$ is also denoted $w(v)$. We say that a $\{0,1\}$-weighting $w$ is \textit{neighbour sum-distinguishing} or \textit{proper} if for all pairs of adjacent vertices $u, v$ it holds that $\sum_{e \in E(v)}w(e) \neq \sum_{e \in E(u)}w(e)$. That is, if the induced vertex-colouring is proper. If $w$ is a $\{0,1\}$-weighting of $G$ and two adjacent vertices $u$ and $v$ have the same weighted degree, then we say that the edge $uv$ is a \textit{conflict}. If two adjacent vertices $u,v$ have the same weighted degree parity we call the edge $uv$ a \textit{parity conflict}. Note that a parity conflict is not necessarily a conflict. If $f : V(G) \rightarrow \mathbb{Z}_k$ is a mapping and $H$ is a spanning subgraph of $G$ such that for all vertices $v$ we have $d_H(v) \equiv f(v) \mod k$ then we say that $H$ is an \textit{f-factor modulo k}. These factors play an important role in the investigations of $\{a,b\}$-properties for bipartite graphs, in particular because of the following result mentioned in~\cite{TWZ}:
\begin{lemma} \label{lem:f-factor}\emph{~\cite{TWZ}}
Let $G$ be a connected graph. If $f: V(G) \rightarrow \mathbb{Z}_2$ is a mapping satisfying \\ $\sum_{v \in V(G)} f(v) \equiv 0 \mod 2 $, then $G$ contains an \textit{f-factor} modulo $2$.
\end{lemma}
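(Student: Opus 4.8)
The plan is to recast the statement in the language of $T$-joins. Set $T = \{v \in V(G) : f(v) = 1\}$; the hypothesis says $|T|$ is even, and finding an $f$-factor modulo $2$ amounts to finding a subset $F \subseteq E(G)$ such that the vertices of odd degree in the spanning subgraph $(V(G), F)$ are exactly those in $T$. I would prove the existence of such an $F$ by induction on $|T|$.

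For the base case $T = \emptyset$ take $F = \emptyset$: every vertex has degree $0$, which is even, as required. For the inductive step, suppose $|T| \geq 2$ and pick two distinct vertices $u, v \in T$. Since $G$ is connected there is a $u$--$v$ path $P$ in $G$; let $E(P)$ denote its edge set. The set $T' = T \setminus \{u, v\}$ has even cardinality $|T| - 2$, so by the induction hypothesis there is a set $F' \subseteq E(G)$ whose odd-degree vertex set is exactly $T'$. Now put $F = F' \,\triangle\, E(P)$, the symmetric difference of the two edge sets.

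The verification rests on one elementary observation: for any edge sets $A, B \subseteq E(G)$ and any vertex $x$, the degree of $x$ in $(V(G), A \triangle B)$ is congruent modulo $2$ to $d_A(x) + d_B(x)$, since an edge at $x$ lying in both $A$ and $B$ contributes $0$ to $A \triangle B$ but $2$ to $d_A(x) + d_B(x)$, while an edge in exactly one of them contributes $1$ to each side. Applying this with $A = F'$ and $B = E(P)$, and noting that the only odd-degree vertices of $P$ are its two endpoints $u$ and $v$, we conclude that the parity of the degree of $x$ in $F$ agrees with that in $F'$ for every $x \notin \{u,v\}$ and is flipped for $x \in \{u, v\}$. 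Hence the odd-degree set of $F$ equals $T' \,\triangle\, \{u, v\} = T' \cup \{u, v\} = T$, completing the induction; taking $H = (V(G), F)$ gives the desired $f$-factor modulo $2$.

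I do not expect a genuine obstacle here: the only point needing care is the bookkeeping in the symmetric-difference identity, together with the remark that $u$ and $v$ may or may not already be incident to edges of $F'$ — but the parity argument is insensitive to this, which is precisely why working modulo $2$ keeps the statement clean. Should one prefer to avoid induction, an alternative is to fix a spanning tree of $G$, root it, process the vertices from the leaves upward, and add the edge from a vertex to its parent exactly when required to correct that vertex's current degree parity; a handshake-lemma computation then forces the root to end up with the correct parity, using $\sum_{v \in V(G)} f(v) \equiv 0 \pmod 2$.
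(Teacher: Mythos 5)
Your proof is correct: the reduction to a $T$-join with $T=f^{-1}(1)$, the induction on $|T|$, and the parity bookkeeping for the symmetric difference with a $u$--$v$ path are all sound, and the spanning-tree alternative you sketch works too. Note that the paper itself gives no proof of this lemma --- it is imported from \cite{TWZ} --- and your argument is essentially the standard one used there, so there is nothing to contrast.
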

\noindent As also pointed out in \cite{kha}, \cite{skow} and \cite{TWZ} this immediately implies that all bipartite graphs where one bipartition set has even size have the $\{a,b\}$-property when $a$ and $b$ are numbers of different parity, since the weighted degree of all the vertices belonging to the even-sized bipartition set can get odd weighted degree while all other vertices get even weighted degree. So the problem is reduced to the case where both bipartition sets have odd size. Another useful tool is Lemma~\ref{lem:remove-edges} below.
\begin{lemma} \label{lem:remove-edges}\emph{~\cite{TWZ}}
Let $q$ be a natural number such that  $ q \geq 4$. Let $G$ be a connected graph and let $A$ be an independent set of at most $q$ vertices such that each vertex in $A$ has degree at least $q-1$, or, each vertex in $A$, except possibly one has degree at least $q$. Assume that no vertex in $A$ is adjacent to a bridge in $G$. Then, for each vertex $a$ of $A$, there is an edge $e_a$ incident with $a$ such that the deletion of all $e_a$, $a \in A$, results in a connected graph unless $|A|=q=4$, all vertices of $A$ have degree $3$ and $G-A$ has six components each of which is joined to two distinct vertices of $A$.
\end{lemma}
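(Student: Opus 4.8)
The plan is to recast the statement as a question about independent transversals in a matroid and solve it with Rado's theorem. Since $A$ is independent, the edge sets $E(a)$, $a\in A$, are pairwise disjoint, so any family $(e_a)_{a\in A}$ with $e_a\in E(a)$ consists of $|A|$ distinct edges, and if $G$ minus all the $e_a$ is connected then so is $G$ minus any subfamily; hence it is enough to produce such a family whose deletion leaves $G$ connected. Working in the cographic matroid $M^*(G)$ (the dual of the cycle matroid of $G$), whose independent sets are exactly the edge sets $F$ with $G-F$ connected, this is precisely the demand that $(E(a))_{a\in A}$ admit a transversal independent in $M^*(G)$. By Rado's theorem such a transversal exists if and only if
$$ r^*\Bigl(\bigcup_{a\in A'}E(a)\Bigr)\ \ge\ |A'| \qquad\text{for every } A'\subseteq A, $$
where $r^*$ is the rank function of $M^*(G)$. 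Deleting from $G$ all edges meeting $A'$ turns $A'$ into $|A'|$ isolated vertices and leaves the induced subgraph on $V(G)\setminus A'$, so the dual rank formula rewrites the displayed inequality as
$$ \sum_{a\in A'}d_G(a)\ \ge\ 2|A'| + c(G-A') - 1 \qquad (\star) $$
for all $A'\subseteq A$, where $c(G-A')$ counts the components after deleting the \emph{vertices} of $A'$.

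It then remains to prove that $(\star)$ holds for every $A'$ unless $G$ and $A$ form the exceptional configuration. Condition $(\star)$ is trivial for $A'=\emptyset$ and whenever $c(G-A')\le 1$ (using $d_G(a)\ge q-1\ge 3$), so assume it fails for some nonempty $A'$ with $c(G-A')\ge 2$. Contract each component of $G-A'$ to a single vertex, keeping the vertices of $A'$, and call the result $\tilde G$, a connected bipartite multigraph; because no vertex of $A$ lies on a bridge of $G$ and bridges of $\tilde G$ correspond to bridges of $G$, the multigraph $\tilde G$ is bridgeless, so each of its $c(G-A')$ contracted vertices has degree at least $2$, whence $\sum_{a\in A'}d_G(a)=|E(\tilde G)|\ge 2\,c(G-A')$. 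Feeding this together with the failure of $(\star)$ and the bound $d_G(a)\ge q-1$ into a short computation yields $(q-5)|A'|\le -4$, which with $q\ge 4$ and $|A'|\le|A|\le q$ forces $q=4$ and $|A'|=|A|=4$; retracing the inequalities then gives that every vertex of $A$ has degree exactly $3$, that $c(G-A)=6$, and that each of these six components sends exactly two edges to $A$. Finally, if some component were joined to a single vertex $a$ of $A$ by two (parallel) edges, the third edge at $a$ would be a bridge, contradicting the hypothesis; so each component is joined to two \emph{distinct} vertices of $A$ — exactly the exceptional configuration. The other branch of the degree hypothesis (all but one vertex of $A$ of degree at least $q$) makes the analogous computation self-contradictory, so $(\star)$ can fail only in that one configuration.

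One should also note that the "unless" is sharp: in the exceptional configuration, contracting the six components turns $G$ into a subdivision of $K_4$ (ten vertices, twelve edges), where removing the four chosen edges leaves $8<9$ edges and hence a disconnected graph, and since deleting each $e_a$ in $G$ corresponds to deleting the associated edge of this subdivision, $G$ minus the four edges is disconnected as well.

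The reduction through Rado's theorem is the clean part; the step I expect to be delicate is the case analysis that extracts the precise exceptional structure from the mere numerical failure of $(\star)$ — handling both branches of the degree hypothesis, controlling whether $A'$ can be a proper subset of $A$, and running the multigraph/bridge arguments carefully enough to forbid parallel edges and to show the six components attach to pairs of distinct vertices of $A$.
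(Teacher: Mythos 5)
The paper does not prove this lemma at all: it is imported verbatim from \cite{TWZ}, so there is no in-paper argument to compare against. Your proposal is therefore a self-contained substitute, and as far as I can check it is correct. The reduction is clean: independence of $A$ makes the sets $E(a)$ disjoint, so Rado's theorem in the cographic matroid turns the problem into the rank inequality, and your translation of $r^*(F)=|F|+1-c(G-F)$ with $F=\bigcup_{a\in A'}E(a)$ into $(\star)$ is right, since $c(G-F)=|A'|+c(G-A')$. The contraction argument is also sound: every edge of $\tilde G$ is incident with a vertex of $A$, hence lies on a cycle of $G$, hence on a closed walk of $\tilde G$ traversing it once, so $\tilde G$ is bridgeless and its component-vertices have degree at least $2$; combining $D\ge 2c$, $D\le 2k+c-2$ and $D\ge (q-1)k$ does force $(q-5)k\le -4$, hence $q=4$, $k=|A|=4$, all degrees $3$, $c=6$, two edges per component, and the bridge argument correctly rules out a component attached twice to one vertex of $A$. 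This is a genuinely different (and arguably slicker) route than the direct spanning-tree/counting style one would expect in \cite{TWZ}: it buys a one-line reduction at the cost of invoking matroid machinery the rest of the paper never uses.

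Two minor inaccuracies, neither fatal. First, in the sharpness remark the contracted graph need not be a subdivision of $K_4$; it is a subdivision of some connected cubic multigraph on four vertices with six edges (two components may attach to the same pair of vertices of $A$). Your edge count ($12-4=8<9$) does not depend on which multigraph it is, so the conclusion stands, but the phrasing should be corrected. Second, your dismissal of the case $c(G-A')\le 1$ quotes only the bound $d_G(a)\ge q-1\ge 3$, which covers the first degree hypothesis; in the second branch the one deficient vertex could a priori have degree $2$, and for $|A'|=1$, $c=1$ the inequality $(\star)$ needs exactly $d\ge 2$. This does hold (a degree-$1$ vertex would be incident with a bridge), but it should be said explicitly rather than folded into the parenthesis.
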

\noindent As can be seen in~\cite{TWZ} and later in this paper, Lemma~\ref{lem:f-factor} and~\ref{lem:remove-edges} work well together under some assumptions in the following way: Let $G$ be a simple bipartite graph with an odd number of vertices in both bipartition sets $X$ and $Y$, and let $w_0$ be a vertex belonging to $X$ with at least 4 neighbours and which is not a cutvertex. Assume that no neighbour of $w_0$ has greater degree than $w_0$ (such a vertex $w_0$ is said to have \emph{local maximum degree}), and such that no neighbour of $w_0$ having the same degree as $w_0$ is incident to a bridge in $G-w_0$. Furthermore, assume that we are not in the exceptional case in Lemma~\ref{lem:remove-edges} when we remove $w_0$ and choose $A$ to be the neighbours of $w_0$ having the same degree as $w_0$. Now we can find a proper $\{0,1\}$-weighting of $G$ as follows. We remove $w_0$ and an edge $e_a$ incident to each $a \in A$ and maintain connectivity by Lemma~\ref{lem:remove-edges}. We call the resulting graph $G'$. First consider the case where $w_0$ has even degree. By Lemma~\ref{lem:f-factor} we find a $\{0,1\}$-weighting of $G'$ such that all vertices in $X\backslash \{w_0\}\cup N(w_0)$ have odd weighted degree and all vertices in $Y\backslash N(w_0)$ have even weighted degree. Now we extend this $\{0,1\}$-weighting to the whole of $G$ by assigning weight 1 to all edges incident to $w_0$ and weight 0 to all edges $e_a$. The parity conflicts are between $w_0$ and its neighbours, but because all edges $e_a$ have weight 0, the weighted degree of $w_0$ is strictly greater than that of all its neighbours.
In the case where $w_0$ has odd degree, we find a $\{0,1\}$-weighting of $G'$ such that all vertices in $X\backslash \{w_0\} \cup N(w_0)$ have even weighted degree and all vertices in $Y\backslash N(w_0)$ have odd weighted degree. As before we extend this $\{0,1\}$-weighting to the whole of $G$ by assigning weight 1 to all edges incident to $w_0$ and weight 0 to all edges $e_a$. \\
Note that this shows that whenever we consider a vertex $w_0$ which is not a cutvertex, then we can find a $\{0,1\}$-weighting where all edges incident to $w_0$ have weight 1 and the only parity conflicts are between $w_0$ and its neighbours. \\ \\

\noindent Before we prove Theorem~\ref{Thm:1}, we will need three facts about simple odd multi-cacti formulated in Lemmas~\ref{lem:1},~\ref{lem:2} and~\ref{lem:3} below. \\
If $G$ is an odd multi-cactus then, by definition, $G$ contains at least two cycles containing two adjacent vertices with at least three neighbours each in $G$ while the other vertices all have two neighbours in $G$, unless $G$ is a single cycle or $K_2$ (possibly with multiple edges). Cycles of this type are called \textit{end-cycles} in $G$.  
\begin{lemma} \label{lem:1}
Let $G \neq K_2$ be a simple odd multi-cactus. For any vertex $v \in V(G)$ there is a $\{0,1\}$-weighting of $G$ such that $v$ and all vertices in the opposite bipartition set to $v$ get weighted degree $1$ and all other vertices get weighted degree $0$ or $2$.
\end{lemma}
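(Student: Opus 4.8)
The plan is to induct on the number $t$ of cycles making up the odd multi-cactus $G$, exploiting the tree-like way it is assembled from cycles of length $2$ modulo $4$ glued along green edges. Since the statement is symmetric under interchanging the two bipartition classes, I may fix one class $X$ with $v\in X$ and call the other class $Y$; I then must produce a $\{0,1\}$-weighting whose set $H$ of weight-$1$ edges satisfies $d_H(y)=1$ for every $y\in Y$, $d_H(v)=1$, and $d_H(x)\in\{0,2\}$ for every $x\in X\setminus\{v\}$. It is worth noting at the outset that every vertex of a simple odd multi-cactus has degree at most $3$ (each vertex lies in at most two of the constituent cycles), so for such vertices the condition $d_H(x)\in\{0,2\}$ is the same as $d_H(x)$ being even; the only degree-$3$ vertices that arise below are the endpoints of glued green edges, and for those the weighted degree is computed explicitly.

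For the base case $t=1$, the graph $G$ is a single cycle $u_0u_1\cdots u_{n-1}$ with $n\equiv 2\pmod 4$ and $u_0=v$, where $u_i\in X$ iff $i$ is even. I would give weight $1$ to the edge $u_{i-1}u_i$ exactly when $i\equiv 0,1\pmod 4$ (with the edge $u_{n-1}u_0$ carrying index $n$). A direct check, which uses $n\equiv 2\pmod 4$ in an essential way, shows that $v$ and every odd-indexed vertex get weighted degree $1$ while every even-indexed vertex other than $v$ gets weighted degree $0$ or $2$, as required.

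For the inductive step, assume $t\ge 2$ and pick an end-cycle $C$ of $G$. It is attached to the rest of $G$ along a single green edge $e=ab$; write $a\in X$, $b\in Y$, and let $G'$ be obtained from $G$ by deleting the interior vertices of $C$. Then $G'$ is a simple odd multi-cactus with $t-1$ cycles, $G'\neq K_2$, $V(G')\cap V(C)=\{a,b\}$, and $E(G')\cap E(C)=\{e\}$. Each of $a,b$ has degree $3$ in $G$, with exactly one incident edge of $C$ besides $e$; call these $e_a$ and $e_b$. I will weight $G'$ by induction and then weight $C$ so that the two weightings agree on $e$; the combined weighted degrees at $a$ and $b$ are then $d_{H'}(a)+w(e_a)$ and $d_{H'}(b)+w(e_b)$, where $H'$ is the set of weight-$1$ edges of $G'$. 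If $v\in V(G')$, I apply the inductive hypothesis to $(G',v)$; then $d_{H'}(b)=1$ and $d_{H'}(a)\in\{0,2\}$ (with $d_{H'}(a)=1$ in the degenerate case $v=a$), so it suffices to weight $C$ so that $e$ keeps its weight, $e_a$ and $e_b$ get weight $0$, and every interior vertex of $C$ gets the target weighted degree; this is exactly the base-case pattern applied to the path $C-e$. If instead $v$ is an interior vertex of $C$, I apply the inductive hypothesis to $(G',a)$, i.e.\ with the $X$-endpoint of $e$ as the distinguished vertex; then $d_{H'}(a)=d_{H'}(b)=1$, and I weight $C$ so that $e$ keeps its weight, $e_a$ gets weight $1$, $e_b$ gets weight $0$, $v$ gets weighted degree $1$, and every other interior vertex of $C$ gets its target. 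This is done by running the base-case pattern along $C-e$ starting from $e_a$ and flipping it at $v$; a short case distinction on the position of $v$ modulo $4$ confirms that the flipped pattern still puts weight $0$ on $e_b$.

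The essential difficulty, and essentially the only non-routine point, is this last case, $v$ interior to the end-cycle. A naive strategy that weights $C$ on its own and simply introduces a single parity \emph{defect} at $v$ fails: the single defect flips the parity pattern along $C-e$, and then the required weighted degree at the far endpoint $b$ of the glued edge can no longer be met. The fix is to let the inductively weighted part $G'$ absorb the defect, which is precisely why the distinguished vertex handed to $G'$ in that case must be $a$, the $X$-endpoint of $e$, rather than an arbitrary vertex; after that choice the computations at $a$ and $b$ both close up. The remaining verifications are routine: that deleting an end-cycle yields a smaller simple odd multi-cactus different from $K_2$, that the two weightings are consistent on $e$, and that vertices strictly inside $G'$ or strictly inside $C$ already carry the correct weighted degree by construction.
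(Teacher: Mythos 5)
Your proof is correct and takes essentially the same route as the paper: induct by peeling off an end-cycle and extending an explicit weighting along the remaining path of length $1 \bmod 4$. The only real difference is that the paper sidesteps your ``hard case'' entirely by always choosing an end-cycle in which $v$ is not a degree-$2$ vertex (possible because an odd multi-cactus that is not a single cycle has at least two end-cycles), whereas you handle that case by handing the distinguished vertex $a$ to the smaller graph --- a valid alternative. One aside should be deleted: it is not true that a simple odd multi-cactus has maximum degree $3$, since arbitrarily many cycles may be pasted along the same green edge, making its endpoints have arbitrarily large degree; fortunately your argument never uses this, because the weighted degrees at $a$ and $b$ are computed explicitly as $d_{H'}(a)+w(e_a)$ and $d_{H'}(b)+w(e_b)$, and every other vertex either keeps its weighted degree from the induction hypothesis or has degree $2$.
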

\begin{proof}
The proof is by induction on the number of vertices $n$. It is easy to check that the statement is true for a single cycle of length 2 modulo 4, so assume $n>6$. Let $C$ be an end-cycle in $G$ such that $v$ is not a vertex in $C$ with only two neighbours. We can assume $C$ is a 6-cycles since subdividing edges with four vertices preserves the conclusion of the lemma. Thus, say that $C=v_1v_2 \cdots v_6v_1$, where $v_1$ and $v_2$ have at least three neighbours in $G$. Since $v$ is in $G-\{v_3,v_4,v_5,v_6\}$ we can use the induction hypothesis on $G- \{v_3, v_4, v_5, v_6\}$ and extend this $\{0,1\}$-weighting to the whole of $G$. 
\end{proof}

\noindent Let $w$ be a $\{0,1\}$-weighting of $G$, let $v$ be a vertex of $G$ and let $a$ be a natural number. Finally, let $C_w$ denote the vertex-colouring induced by $w$. Let $C_w(v,a)$ denote the colouring obtained from $C_w$ by replacing the colour $C_w(v)$ of $v$ with the colour $C_w(v)+a$. If $C_w(v,a)$ is a proper vertex-colouring we say that \textit{$w$ is a proper $\{0,1\}$-weighting of $G$ when the degree of $v$ is increased by $a$}. This may be thought of as a neighbour sum-distinguishing edge-weighting where the vertex $v$ has some pre-assigned weight.

\begin{lemma} \label{lem:2}
Let $G \neq K_2$ be a simple odd multi-cactus. Furthermore, let $u,v$ be any two vertices in $G$ belonging to the same bipartition set (possibly $u=v$). There is a proper $\{0,1\}$-weighting of $G$ when the weighted degrees of both $u$ and $v$ are increased by $1$ (if $u=v$ the weighted degree is increased by $2$).
\end{lemma}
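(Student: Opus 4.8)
The plan is to mirror the proof of Lemma~\ref{lem:1}: induct on $n=|V(G)|$ and peel off an end-cycle. As there, the starting observation is that subdividing an edge by four vertices preserves the conclusion — a $\{0,1\}$-weighting of the smaller graph extends across the new path of five edges without creating conflicts and without changing the colours of that path's two endpoints — so when we peel an end-cycle $C$ we may assume $C$ is a $6$-cycle, and, choosing the $6$-cycle structure suitably, that if one of the target vertices lies on $C$ then it is one of the six vertices of $C$. The base case is the single cycle of length $2$ modulo $4$, which (again by the subdivision remark) reduces to a $6$-cycle $v_1v_2\cdots v_6v_1$; there one simply exhibits an explicit weighting in the two essentially different situations $u=v$ (increment by $2$) and $u\neq v$ with $u,v$ in a common part.

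For the inductive step, let $G\neq K_2$ not be a single cycle, so it has at least two end-cycles. Fix one, $C=v_1v_2v_3v_4v_5v_6v_1$, with $v_1,v_2$ the two attachment vertices ($d_G(v_1),d_G(v_2)\ge 3$) and $d_G(v_3)=\dots=d_G(v_6)=2$, and set $G'=G-\{v_3,v_4,v_5,v_6\}$, a connected simple odd multi-cactus with fewer vertices; since $G$ is not a single cycle, $G'\neq K_2$, so the induction hypothesis applies. The step rests on one observation about any weighting $w'$ of $G'$ that is proper when the weighted degrees of certain \emph{special} vertices are increased: for every non-special vertex $x$ the plain colour $C_{w'}(x)$ already differs from the (possibly increased) colour of each neighbour of $x$ in $G'$, so after re-inserting the path $v_2v_3v_4v_5v_6v_1$ we only need to avoid conflicts among $v_1,\dots,v_6$.

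If some end-cycle of $G$ contains neither $u$ nor $v$ among its degree-$2$ vertices, take $C$ to be such a cycle and apply the induction hypothesis to $G'$ with targets $u,v$; any of $u,v$ equal to $v_1$ or $v_2$ gets its increment realised by setting the incident new edge $v_6v_1$ or $v_2v_3$ to $1$, and every other target lies inside $G'$ and is imported directly. It then remains to weight $v_2v_3,v_3v_4,v_4v_5,v_5v_6,v_6v_1$ while keeping (or, for a target among $v_1,v_2$, raising by one) the colours of $v_1$ and $v_2$; a short check on the remaining free edges makes $v_3,v_4,v_5,v_6$ proper. Otherwise $G$ has exactly two end-cycles and $u\neq v$ are degree-$2$ vertices of different ones, so $G$ is a chain of $6$-cycles and, by the symmetry of $C$, we may assume $u\in\{v_3,v_4\}$. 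Apply the induction hypothesis to $G'$ with targets $v$ and whichever of $v_1,v_2$ lies in $v$'s part — this is $v_1$ if $u=v_3$ and $v_2$ if $u=v_4$ — realise that attachment vertex's increment on its incident new edge, and then weight the path $v_2v_3v_4v_5v_6v_1$ so that the increased colour of $u$ and the colours of the other internal vertices are proper; once again a finite check.

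The main obstacle is the bookkeeping of this final extension: one must simultaneously avoid all conflicts among $v_1,\dots,v_6$ after the prescribed increments at $u$ and at the auxiliary target have been imposed, and shift the colours of $v_1,v_2$ by exactly the amount that keeps the imported weighting of $G'$ proper. I expect the chain case to be the delicate one, because there a target vertex sits inside the peeled-off cycle and the choice of auxiliary attachment vertex — forced by the parity of that vertex's position on $C$ — is exactly what is needed to make the path-weighting go through.
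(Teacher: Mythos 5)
Your overall strategy is the paper's: induct, peel a $6$-end-cycle $C=v_1v_2v_3v_4v_5v_6v_1$ (attachment vertices $v_1,v_2$), and, when one target is a degree-$2$ vertex of $C$, apply the induction hypothesis to $G'=G-\{v_3,v_4,v_5,v_6\}$ with an auxiliary target at an attachment vertex whose increment is realised by its new incident edge. This works for $u=v_3$ (where the paper does exactly this), but the deferred ``finite check'' genuinely fails for $u=v_4$. There you take targets $v$ and $v_2$, which forces $w(v_2v_3)=1$ and $w(v_1v_6)=0$; writing $w_{34},w_{45},w_{56}$ for the remaining path edges, properness along $v_2v_3v_4v_5v_6v_1$ forces $w_{45}=1$, $w_{34}\neq w'(v_2)$, $w_{56}\neq w_{34}+1$ and $w_{56}\neq w'(v_1)$. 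The induction hypothesis may return $w'(v_1)=0$ and $w'(v_2)=1$ (it only forbids $w'(v_1)=w'(v_2)+1=2$), and then $w_{34}=0$, hence $w_{56}\neq 1$, hence $w_{56}=0=w'(v_1)$ --- no completion exists. The $u=v_3$ case survives precisely because its auxiliary target is $v_1$ and the two bad configurations are exactly those excluded by $w'(v_1)+1\neq w'(v_2)$; no such cancellation happens for $v_4$ (or $v_5$). For this ``deep'' on-cycle target the paper switches tools: it applies Lemma~\ref{lem:1} to $G'$ with the off-cycle target as the special vertex, which pins every colour of $G'$ to $1$ on one side and to $\{0,2\}$ on the other, after which an explicit assignment to the five new edges works. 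You need this (or some comparable strengthening of what the recursion returns) to close that sub-case.

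A second, smaller slip: in your first branch you realise a genuine target increment at $v_1$ by setting $w(v_1v_6)=1$. With $w(v_2v_3)=0$ the pair $v_3,v_4$ then forces $w_{45}\neq 0$ while the pair $v_5,v_6$ forces $w_{45}\neq 1$, so the path cannot be completed. The increment in the lemma is a pre-assigned extra weight, not an edge: the correct move is $w(v_1v_6)=0$, so that $v_1$'s actual colour equals the incremented colour the induction hypothesis already checked against $v_1$'s neighbours in $G'$; the path completion then goes through (the failing configuration $w'(v_1)=0$, $w'(v_2)=1$ is exactly the one the induction hypothesis rules out when $v_1$ is a target). This one is repairable by your own ``short check''; the $v_4$ sub-case above is the real gap.
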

\begin{proof}
First note that the case $u=v$ follows from Lemma~\ref{lem:1}, so we assume that $u \neq v$.  The proof is by induction on the number of vertices $n$. It is easy to check that the statement holds for a single cycle of length 2 modulo 4. As in the proof of Lemma~\ref{lem:1} we choose and end-cycle $C$ such that one of $v$ and $u$, say, $u$ is not a vertex in $C$ with only two neighbours in $G$ and we may assume that $C=v_1v_2 \cdots v_6v_1$, where $v_1$ and $v_2$ have at least three neighbours in $G$. If $v$ and $u$ are both in $G- \{v_3, v_4, v_5, v_6\}$ then we use the induction hypothesis on $G- \{v_3, v_4, v_5, v_6\}$ and get a proper $\{0,1\}$-weighting of $G-\{v_3, v_4, v_5, v_6\}$ if the weighted degree of both $u$ and $v$ are increased by 1. We can easily extend this $\{0,1\}$-weighting to the whole of $G$, a contradiction. So we can assume that $u$ is in $G- \{v_3, v_4, v_5, v_6\}$ and $v$ is one of $v_3,v_5$ (the other cases are similar). If $u$ is one of $v_1, v_2$, say, $v_1$ and $v$ is $v_5$, then we use Lemma~\ref{lem:1} on $G- \{v_3, v_4, v_5, v_6\}$ choosing $v_1$ as our special vertex. Then we get a $\{0,1\}$-weighting $w$ of $G- \{v_3, v_4, v_5, v_6\}$ where $v_1$ and all vertices in the opposite bipartition set to $v_1$ get weight 1 and all other vertices get weight 0 or 2. We extend this $\{0,1\}$-weighting to the whole of $G$ by defining $w(v_1v_6)=w(v_4v_5)=1$ and $w(v_2v_3)=w(v_3v_4)=w(v_5v_6)=0$. \\
If $u=v_1$ and $v$ is $v_3$, then again we use Lemma~\ref{lem:1} on $G- \{v_3, v_4, v_5, v_6\}$ choosing $v_1$ as our special vertex. As before we get a $\{0,1\}$-weighting $w$ of $G- \{v_3, v_4, v_5, v_6\}$ we can extend to the whole of $G$ by defining $w(v_1v_6)=w(v_3v_4)=1$ and $w(v_2v_3)=w(v_4v_5)=w(v_5v_6)=0$.  \\
This leaves us with the case where $u$ is in $G-C$ and $v$ is one of $\{v_3, v_4, v_5, v_6\}$. We can assume that $v_1$ is in the same bipartition set as $u$ and we start by considering the case where $v=v_3$. In this case we use the induction hypothesis on $G- \{v_3, v_4, v_5, v_6\}$ choosing $u$ and $v_1$ as our special vertices. We extend this $\{0,1\}$-weighting, letting the edge $v_1v_6$ play the role of the extra weight on $v_1$ by defining $w(v_1v_6)=1$ and $w(v_2v_3)=0$. Now $v_1$ and $v_2$ have different weighted degrees by the induction hypothesis so we can choose the weights on $v_3v_4$ and $v_5v_6$ to be different such that we avoid conflicts between $v_6$ and $v_1$, between $v_3$ and $v_2$ and between $v_4$ and $v_5$. Finally we define $w(v_4v_5)=0$ to avoid conflicts between $v_5$ and $v_6$, and between $v_3$ and $v_4$. \\
The case where $v=v_5$ remains. Here we use Lemma~\ref{lem:1} on  $G- \{v_3, v_4, v_5, v_6\}$ choosing $u$ as our special vertex and extend this $\{0,1\}$-weighting to $G$ by defining $w(v_2v_3)=w(v_1v_6)=w(v_3v_4)=0$ and $w(v_5v_6)=w(v_4v_5)=1$. 
\end{proof}

\noindent Using Lemma~\ref{lem:1} and induction as in the proof of Lemma~\ref{lem:1} we can easily derive Lemma~\ref{lem:3} below.
\begin{lemma} \label{lem:3}
Let $G$ be an odd multi-cactus where the red-green edge-colouring is unique. If $G'$ is obtained from $G$ by replacing a red edge with an edge of multiplicity $> 1$, then $G$ has the $\{0,1\}$-property.
\end{lemma}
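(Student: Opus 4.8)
\noindent\emph{Plan of proof.} (The conclusion is of course that $G'$ has the $\{0,1\}$-property: an odd multi-cactus never has that property, whereas $G'$ — in which a red edge has been replaced by a multiple edge — is no longer an odd multi-cactus.) The plan is to induct on $|V(G)|$ after two normalising reductions. First I would reduce to the case where the red edge $e=xy$ is replaced by a \emph{double} edge rather than one of higher multiplicity: any proper $\{0,1\}$-weighting of the graph carrying only two parallel copies of $e$ extends to a larger multiplicity by assigning weight $0$ to the additional copies, which changes no weighted degree. Second, since a multiple green edge of $G$ may be collapsed to a single green edge for the same reason, I would assume $G$ itself is simple, so that Lemma~\ref{lem:1} applies to subgraphs of $G$; and, exactly as in the proof of Lemma~\ref{lem:1}, I would note that whenever an end-cycle is singled out it may be taken to have length $6$, since un-subdividing a stretch of four degree-$2$ vertices does not affect whether $G'$ has the $\{0,1\}$-property. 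Write $C$ for the unique cycle of $G$ containing $e$.

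Because $G$ has a unique red--green colouring it is neither $K_2$ nor a single cycle, so it has at least two cycles. \emph{Inductive step:} suppose $G$ has at least three cycles. By the passage preceding Lemma~\ref{lem:1}, $G$ has at least two end-cycles, and at most one of them contains $e$ (a red edge lies in a single cycle), so I would pick an end-cycle $C_0=v_1v_2v_3v_4v_5v_6v_1$ with $e\notin C_0$, where $v_1v_2$ is the green pasting edge and $v_3,v_4,v_5,v_6$ have degree $2$ in $G$. Then $G_1:=G-\{v_3,v_4,v_5,v_6\}$ is a simple odd multi-cactus with one fewer cycle — still at least two, hence still with a unique colouring — in which $e$ is still a red edge, so by induction the graph $G_1'$ obtained from $G_1$ by doubling $e$ has a proper $\{0,1\}$-weighting $w_1$. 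Setting $p=w_1(v_1)$, $q=w_1(v_2)$ (so $p\ne q$, as $v_1v_2\in E(G_1')$), I would extend $w_1$ to $G'=G_1'$ with the path $v_2v_3v_4v_5v_6v_1$ re-attached: put $w(v_2v_3)=w(v_6v_1)=0$, which leaves $w(v_1)=p$ and $w(v_2)=q$ unchanged and so creates no conflict on an edge of $G_1'$ (including $v_1v_2$), and then choose $w(v_3v_4),w(v_4v_5),w(v_5v_6)\in\{0,1\}$ so that consecutive vertices along $v_2v_3v_4v_5v_6v_1$ get distinct weighted degrees. Since $w(v_3)=w(v_3v_4)$ and $w(v_6)=w(v_5v_6)$, the requirements reduce to $w(v_4v_5)=1$, $w(v_3v_4)\ne w(v_5v_6)$, $w(v_3v_4)\ne q$ and $w(v_5v_6)\ne p$, and a short case distinction (on which of $p,q$ lie in $\{0,1\}$), using $p\ne q$, shows a valid choice always exists.

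\emph{Base case:} $G=C_a\cup C_b$, two $6$-cycles pasted along a green edge $g=v_1v_2$, with $e$ a red edge of (say) $C_a$. Up to the symmetries of this graph there are only two positions for $e$ — adjacent to $g$, or opposite to $g$ — and in each case I would exhibit a proper $\{0,1\}$-weighting of $G'$ directly; Lemma~\ref{lem:1} (or a variant of it) applied to $C_b$ organises part of the bookkeeping by weighting $C_b$ with controlled values at $v_1,v_2$, after which one weights the five edges of $C_a$ other than $g$ together with the two copies of $e$ so that $v_1$ and $v_2$ out-weigh their neighbours.

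I expect the base case, together with the case check that closes the inductive step, to be the real obstacle — not conceptually difficult, but delicate because of a rigidity phenomenon: $x$ and $y$ are joined by both copies of $e$, so every weight placed on those two edges changes $w(x)$ and $w(y)$ by the same amount, and hence the conflict at $e$ can never be broken by re-weighting $e$ itself; it must be resolved using the other edges of the short cycle $C$. The work lies in checking that, once the contributions of $v_1$ and $v_2$ coming from the rest of $G'$ are fixed, this cycle is never so constrained as to force $w(x)=w(y)$ or a new conflict at $v_1$ or $v_2$.
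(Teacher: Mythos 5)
Your overall plan---peel off a $6$-vertex end-cycle avoiding the doubled red edge $e$, apply induction, and re-attach the length-five path with weight $0$ on the two edges meeting $v_1$ and $v_2$---is exactly the induction the paper intends (its ``proof'' of Lemma~\ref{lem:3} is only the one-line remark that it follows from Lemma~\ref{lem:1} by the same induction), and your inductive step is sound: the two completions $(w(v_3v_4),w(v_4v_5),w(v_5v_6))=(1,1,0)$ and $(0,1,1)$ can only both fail when $p=q$, which is excluded.

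There is, however, a genuine gap in your normalisation. After suppressing the green multiplicities of $G$ you argue that a unique red--green colouring rules out $K_2$ and a single cycle. But a single cycle of length $2 \bmod 4$ carrying a green edge of multiplicity at least $2$ \emph{does} have a unique colouring (the multiple edge must be green and alternation fixes the rest), so it is within the scope of the lemma, and after your simplification it becomes a bare cycle for which your induction has no base case. Worse, the simplification is unsound precisely there: a simple $(4k+2)$-cycle with one doubled edge is itself an odd multi-cactus and hence bad (for a $6$-cycle with $v_1v_2$ doubled, the non-conflict conditions at $v_1v_2$, $v_3v_4$ and $v_5v_6$ force $w(v_2v_3)$, $w(v_4v_5)$, $w(v_6v_1)$ to be pairwise distinct elements of $\{0,1\}$, which is impossible), whereas the original $G'$ is good only because the retained green multiplicity can contribute total weight $2$ at its ends. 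So this case must be argued separately with the multiplicity kept. A smaller secondary issue: your base case treats only two $6$-cycles, but when the constituent cycle containing $e$ has length at least $10$ one cannot always un-subdivide it to a $6$-cycle without destroying $e$ (a $10$-cycle with $e$ the red edge opposite the pasting edge has no run of four consecutive degree-$2$ vertices avoiding $e$), so either further positions of $e$ must be checked or the re-attachment argument must be adapted to absorb the extra length.
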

\noindent In a graph $G$, a \textit{suspended path} or \textit{suspended cycle} is a path or cycle $v_1v_2...v_q$ such that all intermediate vertices have degree 2 and the end-vertices $v_1,v_q$ have degree at least 3. All vertices $v_1,...,v_q$ should be distinct, except that possibly $v_1=v_q$ (if it is a suspended cycle). \\
Having these small facts established we are ready for the proof of Theorem~\ref{Thm:1}. The proof follows the same approach as the proof of Theorem~\ref{thm:12oddmul} in~\cite{TWZ}, but new problems arise which have to be dealt with along the way. At the end of the proof, the reader is referred to~\cite{TWZ}.

\begin{proof}[of Theorem~\ref{Thm:1}]
It suffices to prove that if $G$ is a connected bridgeless bipartite graph without the $\{0,1\}$-property, then $G$ is an odd multi-cactus. \\
Suppose the theorem is false and let $G$ be a smallest counterexample. That is, among all bridgeless bipartite graphs without the \{0,1\}-property which are not odd multi-cacti, $G$ has the fewest vertices and subject to that, the fewest edges. Note that by induction and Lemma~\ref{lem:3} we can assume that if there is an edge $uv$ of multiplicity greater than 1, then $uv$ must have multiplicity 2 and be a bridge in the simple graph underlying $G$. Let $X$ and $Y$ be the two bipartition sets of $G$. By the remark following Lemma~\ref{lem:f-factor} we can assume that both $X$ and $Y$ have odd size. \\
First note that if $v \in X$ is a vertex in $G$ which is only adjacent to one other vertex $v'$ (since $G$ is bridgeless the multiplicity of $uv$ is then at least 2), then for any edge $e=v'v'' \neq v'v$ in $G-v$ incident to $v'$, the graph $G'=G-v-e$ is connected. So by Lemma \ref{lem:f-factor} the graph $G'$ contains a spanning subgraph $H$ where all vertices in $(X \setminus \{v,v''\}) \cup \{v'\}$ have odd degree and vertices in $Y \setminus \{v'\}$ have even weighted degree. By assigning weight 1 to all edges in $E(H) \cup \{e\}$ and weight 0 to all other edges we get a proper $\{0,1\}$-weighting of $G$, a contradiction. Thus, we can assume that there is no vertex $v$ in $G$ which is only adjacent to one other vertex $v'$.  \\ 
Let $B$ denote an endblock in $G$. Note that the above implies that there are no multiple edges in $B$. \\ \\
\textbf{Claim 1:} $B$ contains no suspended path of length 2. \\ \\
Assume that $y_1xy_2$ is a suspended path of length 2 in $B$, where $x \in X$ and $y_1,y_2 \in Y$. By Lemma~\ref{lem:f-factor} there exists a spanning subgraph $H$ of $G'=G-x$ such that all vertices in $X\backslash \{x\}$ have odd degree and all vertices in $Y$ have even degree. From $H$ we can construct a $\{0,1\}$-weighting $w_{G'}$ of $G'$ such that each vertex in $X\backslash \{x\}$ has odd weighted degree and each vertex in $Y$ has even weighted degree. We do this by assigning weight 1 to the edges in $H$ and weight 0 to the edges outside $H$. We extend this $\{0,1\}$-weighting to a $\{0,1\}$-weighting $w_{G}$ of the whole graph $G$ by defining $w_G(xy_1)=w_G(xy_2)=0$. The only possible conflicts are $xy_1$ and $xy_2$ in the case where $w_{G'}(y_1)=0$ or $w_{G'}(y_2)=0$. If we can remove an edge $e_1=y_1z_1$ incident to $y_1$ and an edge $e_2=y_2z_2$ incident to $y_2$ in $G'$ and still have a connected graph, then we can avoid this situation as follows: using Lemma~\ref{lem:f-factor} we redefine $H$ to be a subgraph of $G-x-e_1-e_2$ such that all vertices in $X-x-z_1-z_2 \cup \{y_1,y_2\}$ have odd weighted degree and all other vertices have even weighted degree. Then define $w_{G'}$ to be the $\{0,1\}$-weighting assigning weight 1 to all edges in $E(H) \cup \{e_1, e_2 \}$ and weight 0 to all other edges. This is a proper $\{0,1\}$-weighting of $G$, so we can assume that we cannot remove two edges incident to $y_1$ and $y_2$ respectively in $G'$ and still have a connected graph.  \\ 
There must be a cycle, $C$, going through $y_1$ and $y_2$ in $G'$ since otherwise $y_1$ and $y_2$ lie in distinct blocks of $G'$, and since the degree of both $y_1$ and $y_2$ is at least 3 and since $G$ is bridgeless it is now possible to remove an edge from both $y_1$ and $y_2$ and still have a connected graph, a contradiction.  We first look at the case where $w_{G'}(y_1)=w_{G'}(y_2)=0$. Here we swap all the weights in $C$ (that is, we change all 1-weights to 0 weights and all 0-weights to 1-weights). This will not change the parity of the weighted degrees and now $y_1$ and $y_2$ both have weighted degree 2. We redefine $w_{G'}$ accordingly, put $x$ back and give the edges $xy_1$ and $xy_2$ weight 0. This gives a proper \{0,1\}-weighting of $G$. \\
Now assume that $w_{G'}(y_1)=0$ and $w_{G'}(y_2) \geq 2$. Actually we can assume that $w_{G'}(y_2) = 2$ since otherwise if $w_{G'}(y_2) > 2$ we just repeat the proof of the previous case (after swapping the weights in $C$ the weighted degree of both $y_1$ and $y_2$ is at least 2). We can assume that all cycles going through $y_1$ in $G'$ also go through $y_2$ (otherwise we simply swap the weights in a cycle containing $y_1$ and not $y_2$). The only possible case is where $G-\{x,y_1,y_2\}$ consists of precisely two connected components $G_1, G_2$ with bipartition sets $X_i, Y_i$ each containing exactly one neighbour of both $y_1$ and $y_2$ (see Figure~\ref{fig:claim1}). Let $x_1,x_2$ denote the neighbours of $y_1$ in $G_1$ and $G_2$ respectively and let $z_1,z_2$ denote the neighbours of $y_2$ in $G_1$ and $G_2$ respectively. We allow the possibility that $x_1=z_1$ or $x_2=z_2$. If one of $X_1, X_2$ has even size, for example $X_1$, then the subgraph of $G_1$ consisting of all edges weighted 1 under $w_{G'}$ will have an odd number of odd degree vertices, which is not possible. So both  $X_1$ and  $X_2$ have odd size. The sets $Y_1$ and $Y_2$ must have different parity, in particular one of them, say, $Y_1$ has even size. Furthermore, if $G_2$ has a proper $\{0,1\}$-weighting $w_{G_2}$ then we can find a proper $\{0,1\}$-weighting of the whole graph $G$ with weight 0 on $y_1x_2$ and $y_2z_2$ as follows:  
If the weighted degrees of $x_2$ and $z_2$ have the same, say, odd parity under $w_{G_2}$, then because both bipartition sets in $G-G_2$ have even size, Lemma~\ref{lem:f-factor} implies that there is a proper \{0,1\}-weighing $w_{G-G_2}$ of $G-G_2$ where $y_1$ and $y_2$ get even weighted degree. We can now define a proper \{0,1\}-weighting $w_G$ of $G$ by $w_G(e)=w_{G_2}(e)$ for $e \in E(G_2)$, $w_G(e)=w_{G-G_2}(e)$ for $e \in E(G-G_2)$ and $w_G(y_1x_2)=w_G(y_2z_2)=0$. So the weighted degree of $x_2$ and $z_2$ do not have the same parity under $w_{G_2}$. Without loss of generality assume that $x_2$ has even weighted degree under $w_{G_2}$ and  $z_2$ has odd weighted degree under $w_{G_2}$. As before there is a proper \{0,1\}-weighting $w_{G-G_2}$ of $G-G_2$ where all vertices in $X_1 \cup \{x\}$ get odd weighted degree and all other vertices get even weighted degree. When extending $w_{G-G_2}$ and $w_{G_2}$ to the whole of $G$, the only possible conflict that can arise is $y_1x_2$, but we can always avoid this conflict by swapping the weights of the edges in a cycle in $G-G_2$ containing $y_1$. The same kind of argument shows that there is no proper $\{0,1\}$-weighting of $G_2$ where the weighted degree of both $x_2$ and $z_2$ are increased by 1 (increased by 2 if $x_2=z_2$) since we can let the edges $x_2y_1$ and $z_2y_2$ play the roles of the extra weights. By the minimality of $G$, the subgraph $G_2$ must either be an odd multi-cactus or contain a bridge. Lemma~\ref{lem:2} shows that $G_2$ cannot be an odd multi-cactus and hence it contains a bridge. Note that this shows that $x_2 \neq z_2$ and since $B$ is an endblock, one of $x_2, z_2$, say $x_2$, is not a cutvertex in $G_2$. If $x_1$ is also not a cutvertex in $G_1$ we do the following: Weight $G_2-x_2$ such that all vertices in $X_2 - \{x_2 \}$ have odd degree and all vertices in $Y_2$ have even degree, and weight $G_1-x_1$ such that all vertices in $X_1 - \{x_1 \}$ have odd degree and all vertices in $Y_1$ have even degree. These two $\{0,1\}$-weightings extend to the whole graph $G$ by assigning weight 0 to all edges incident to $x_1$, $x_2$ and $y_1$ except that we assign weight 1 to the edges $y_1x_1,$ and $y_1x_2$ and also to $y_1x$. So $x_1$ must be a cutvertex in $G_1$. Since $B$ is an end-block it follows that $z_2$ is not a cutvertex in $G_2$ and if $z_1$ is not a cutvertex in $G_1$ we do the same as before (with $x_1$ replaced by $z_1$ and $x_2$ replaced by $z_2$). The only possibility is that $x_1=z_1$ is a cutvertex. By Lemma~\ref{lem:f-factor} there is a $\{0,1\}$-weighting $w_{G_1}$ of $G_1$ where all vertices in $X_1\backslash \{x_1\}$ get odd weighted degree and all other vertices get even weighted degree, and a $\{0,1\}$-weighting $w_{G_2}$ of $G_2$ where all vertices in $X_2\backslash \{x_2\}$ get even weighted degree and all other vertices get odd weighted degree. We extend $w_{G_1}$ and $w_{G_2}$ to a $\{0,1\}$-weighting of the whole of $G$ by assigning weight 1 to the edges $y_1x_2$ and $y_2x_1$ and weight 0 to the edges $y_2x$, $y_1x$, $y_2z_2$ and $y_1x_1$. The only possible conflicts are between $y_1, y_2$ and $x_1=z_1$ if $x_1$ has weighted degree 1. If this is the case, then since $x_1$ is a cutvertex in $G_1$, there is a cycle in $G_1$ containing two edges with weight 0 incident to $x_1$. We can then swap the weights on such a cycle to avoid conflicts between $x_1$ and $y_1$ and $y_2$. This contradicts $G$ being a bad graph. 
\begin{figure}[H]
\centering 
\includegraphics[scale=0.8]{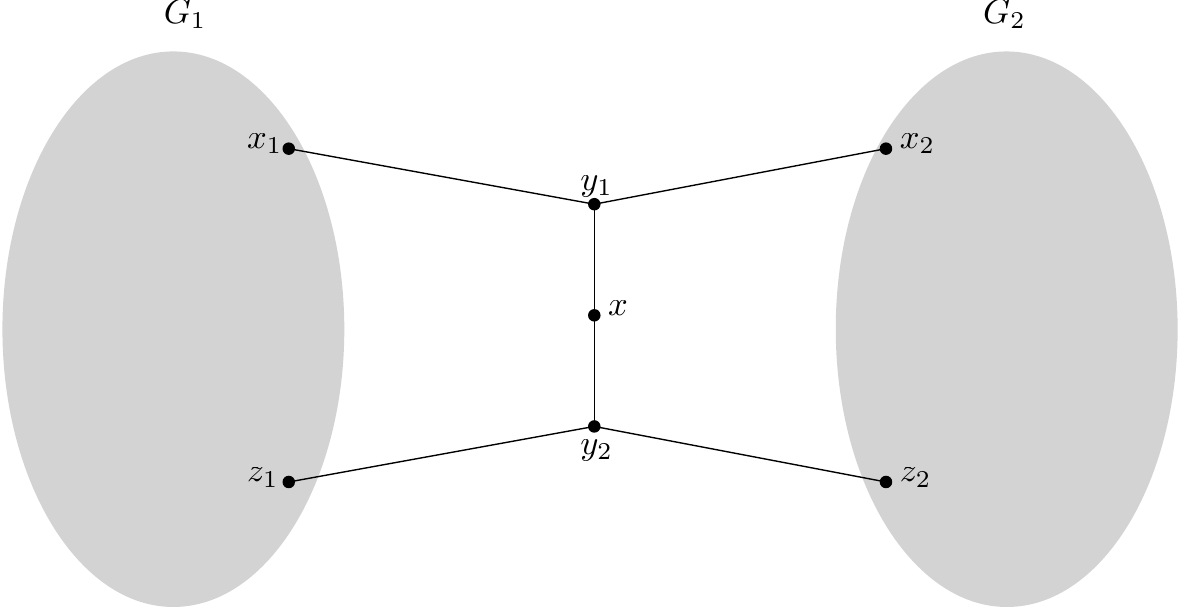}
\caption{An illustration of the situation in Claim 1.}
\label{fig:claim1}
\end{figure}

\noindent \textbf{Claim 2:} $B$ contains no suspended path or cycle of length 4. \\ \\
Assume that $y_1x_1y_2x_2y_1$ is a suspended cycle of length 4 in $B$. By Lemma~\ref{lem:f-factor} there is a proper $\{0,1\}$-weighting of $G-\{x_1,y_2,x_2\}$ where all vertices in $X \backslash \{x_1,x_2\}$ get even weighted degree and all vertices in $Y \backslash \{y_2\}$ get odd weighted degree. This proper $\{0,1\}$-weighting can now be extended to the whole graph by assigning weight 1 to the edges $y_1x_1$ and $x_2y_1$ and weight 0 to the edges $x_1y_2$ and $y_2x_2$, a contradiction. \\
The case where $y_1x_1y_2x_2y_3$ is a suspended path of length 4 in $G$ is treated in the same way as the suspended path of length 2 in the proof of the previous claim (here we just choose the graph $G-x_1-y_2-x_2$ as our $G'$). \\ \\
\textbf{Claim 3:} $G$ contains no suspended path or cycle of length at least 5. \\ \\
Suppose that $y_1x_1y_2x_2y_3x_3$ is a path in $G$ where $x_1 \in X$ and where the degree in $G$ of each of $x_1, y_2, x_2,y_3$ is 2. Now delete each of $x_1, y_2, x_2,y_3$ and add an edge $y_1x_3$ if there is not already such an edge. If the resulting graph is not an odd multi-cactus it has a proper $\{0,1\}$-weighting by the minimality of $G$. This $\{0,1\}$-weighting can now be used to find a proper \{0,1\}-weighting of $G$: we put back the vertices $x_1, y_2, x_2,y_3$. If $y_3x_3$ was not originally in $G$ we give $y_1x_1$ and $y_3x_3$ the same weight as $y_1x_3$ and delete that edge. We give $y_2x_2$ the opposite colour. Then we give $x_1y_2$ and $x_2y_3$ distinct colours. Since $y_1$ and $x_3$ have different colours, there are two choices for this and one of them will give a proper $\{0,1\}$-weighting. If $y_1x_3$ was in $G$ to begin with we assign weight 0 to the edges $y_1x_1$ and $y_3x_3$. We give $y_2x_2$ weight 1. Then we give $x_1y_2$ and $x_2y_3$ distinct colours. Again, there are two choices for this and one of them will give a proper $\{0,1\}$-weighting, a contradiction. So we can assume that $G'$ is an odd multi-cactus. Since $G$ is not an odd multi-cactus the only possibility is that $G$ is obtained from an odd multi-cactus by subdividing a green edge joining two vertices of degree at least 3 by four vertices. In this case we can find another path $y_1'x_1'y_2'x_2'y_3'x_3'$ where the degree in $G$ of each of $x_1', y_2', x_2',y_3'$ is 2 and define $G'$ from that such that $G'$ is not an odd multi-cactus, unless $G$ consists of two vertices joined by $s \geq 3$ paths of length 5. In this case it is easy to check that $G$ has a proper $\{0,1\}$-weighting. \\ \\
By Claims $1,2,3$ all degree-2 vertices in the endblocks of $G$ lie on a suspended path of length 3. In $G$ we replace all suspended paths of length 3 with an edge to form a multi-graph $G^*$. Edges arising from suspended paths will be called \emph{blue edges}. Note that $G^*$ is bridgeless and the minimum degree in any endblock is at least 3. Now let $B$ be an endblock of $G^*$. Possibly $G^*=B$. If $B \neq G^*$, then we let $x_0$ be the unique cutvertex of $G^*$ contained in $B$. If the deletion of some pair of neighbouring vertices in $B$ disconnects $G^*$, then we define a graph $B'$ as follows: we select an edge $y_0z_0$ in $B$ such that $G^*-y_0-z_0$ is disconnected and such that some component, $H$, not containing $x_0$ of $G^*-y_0-z_0$ is smallest possible. Possibly $x_0$ is one of $z_0$ and $y_0$. The union of that component, $H$, and $y_0, z_0$ together with all edges connecting them is called $B'$. Otherwise, if the deletion of any pair of adjacent vertices in $B$ leaves a connected graph we define $B'=H=B$. Note that in this case we must have $B'=H=B=G^*$, since the deletion of $x_0$ together with any of its neighbours disconnects $G^*$.  \\ \\
\textbf{Claim 4:} There is an end-block $B$ of $G^*$ such that all vertices in $H$ have degree 3 in $G^*$. \\ \\
The overall strategy for proving this claim is to find a vertex $w_0$ in $H$ with local maximum degree, and then use the procedure explained in the remark following Lemma~\ref{lem:remove-edges} to find a proper $\{0,1\}$-weighting of $G$ where all edges incident to $w_0$ have weight 1. \\ \\
Case 1: We can choose $B$ to be an end-block whose cutvertex is adjacent to only one other block. 
\\ \\
Suppose the claim is false and let $w_0 \in V(H)$ (if $B'=B$ choose $w_0$ distinct from $x_0$) be a vertex having maximum degree. We want to choose $w_0$ such that when we remove $w_0$ we avoid the exceptional case in Lemma~\ref{lem:remove-edges} (when $A$ is set to be the neighbours of $w_0$ having the same degree as $w_0$). If $d(w_0) \geq 5$ then this exceptional case cannot occur. If $d(w_0)=4$ and some neighbour of $w_0$ has degree 3 then the exceptional case is also avoided. Such a $w_0$ is possible to choose unless all vertices in $H$ have degree 4. If it is impossible to avoid the exceptional case then it must be that whenever we remove a vertex $w_0$ in $H$ and its four neighbours the resulting graph has six components each having exactly two neighbours in $N(w_0)$. If this is the case we choose $w_0$ to be such that the component arising when deleting $w_0$ and its neighbours (there are six components) containing $y_0$ or $z_0$ is maximal. The other components are easily seen to be isolated vertices (otherwise we redefine $w_0$ to be a neighbour of $w_0$ not joined to the component containing $y_0$ or $z_0$ and this will contradict the choice of $w_0$). But these isolated vertices must have degree 2, a contradiction. This shows that we can always find a $w_0$ of maximum degree in $H$ and avoid the exceptional case in Lemma~\ref{lem:remove-edges} (when $A$ is set to be the neighbours of $w_0$ having the same degree as $w_0$). We now choose such a $w_0$. When we have found and defined $w_0$ we go back to considering the original graph $G$. We will look at three different subcases: 
\begin{enumerate}
\item $w_0$ is not a neighbour of $z_0$ or $y_0$.
\item $w_0$ is a neighbour of $z_0$ and $z_0 \neq x_0$.
\item $w_0$ is a neighbour of $x_0$.
\end{enumerate} 
Subcase 1: This subcase is dealt with as described in the remark following Lemma~\ref{lem:remove-edges}. By the minimality of $H$, none of the neighbours of $w_0$ are incident to a bridge in $G-w_0$.   \\ \\
Subcase 2: We can assume that $z_0$ has degree strictly greater than that of $w_0$ since otherwise we do the same as in Subcase 1. This implies that the degree of $z_0$ is at least 5. We can assume that all vertices in $H$ having maximum degree are adjacent to $z_0$ or $y_0$, since otherwise we can redefine $w_0$ and go to Subcase 1. Note that this implies that we can never be in the exceptional case in Lemma~\ref{lem:remove-edges} when we delete a vertex $v$ in $H$ of maximum degree and define $A$ to be the neighbours of $v$ with the same degree as $v$.  \\ 
We can assume that $z_0$ has precisely one neighbour in each component other than $H$ in $B-y_0-z_0$, since otherwise we do the same as in Subcase 1 except we now also remove two edges from $z_0$ that go to the same component of $B-y_0-z_0$ other than $H$. If we then end up with a colour-conflict between $w_0$ and $z_0$ we have made sure that we can swap the weights in a cycle avoiding $H$ that contains two edges with weight 0 incident to $z_0$. This will then avoid the conflict between $w_0$ and $z_0$ and give a proper $\{0,1\}$-weighting of $G$. So $z_0$ has precisely one neighbour in each component other than $H$ in $B-y_0-z_0$. We can also assume that there is at most one component $C$ other than $H$ in $B-y_0-z_0$ which contains a neighbour of $z_0$ since otherwise we can remove two edges incident to $z_0$ going to two different components distinct from $H$ and use the same weight-swapping argument as before to avoid a conflict between $z_0$ and $w_0$ (this time the cycle will also go through $y_0$). \\
If $z_0$ has no neighbour in any component of $G-z_0-y_0$ other than $H$, then since $G-y_0-z_0$ is disconnected it must be the case that $y_0=x_0$. If this is the case we redefine $w_0$ to be $z_0$ and go to Subcase 3.  So we assume that there is such a component $C$ other than $H$ in $G-z_0-y_0$ containing a neighbour of $z_0$. We start by doing the same as before giving $w_0$ maximum weighted degree and assigning 0 to at least one edge $e_a$ incident to each neighbour $a$ of $w_0$ which has the same degree as $w_0$. We also assign weight 0 to the unique edge $z_0z_1$ joining  $z_0$ to the component $C$. Actually, since $w_0$ and all the neighbours of $w_0$ have the same weighted degree-parity, each of these neighbours of $w_0$ with the same degree as $w_0$ will be incident to at least two edges weighted 0. We can assume that $w_0$ and $z_0$ have the same weighted degree and the edge $z_0y_0$ has weight 1 (otherwise we can swap the weights in a cycle using the edges $z_0z_1$ and $z_0y_0$ to avoid the conflict between $w_0$ and $z_0$). If we swap the weights in a cycle in $G-w_0$ containing two edges incident to $z_0$ with the same weight, then the only conflicts we can create are between $w_0$ and a neighbour $v$ of $w_0$ with the same degree as $w_0$, and these conflicts can only arise when the cycle goes through the only two edges incident to $v$ with weight 0. If $v$ is a neighbour of $w_0$ with the same degree as $w_0$ and incident to only two edges with weight 0, then we call this pair of weight 0-edges a \textit{forbidden pair} of edges. \\ 
We will now show that we can always find a cycle in $G-w_0$ containing two edges incident to $z_0$ with the same weight that does not use any forbidden pair of edges. Note that all neighbours of $w_0$ which are incident with a forbidden pair of edges have the same degree as $w_0$ and are therefore neighbours of $y_0$. Let $v_1,...,v_m$ denote these neighbours of $w_0$. Since $z_0$ has weighted degree strictly greater than 3, there is a vertex $z_2 \neq w_0$ in $N(z_0) \cap V(H)$.
It suffices to find a path $P$ from $y_0$ to a vertex $z'$ in $N(z_0) \cap V(H)$ in the connected graph $G-z_0-w_0$ (connected by the minimality of $H$) not using any forbidden pair of edges, since then we can define our cycle to be $P \cup \{z_0y_0,z'z_0\}$ if the weight on $z_0z'$ is 1, or $P \cup P_c$, where $P_c$ is a path from $z_0$ to $y_0$ in $G-H-z_0y_0$ if the weight on $z_0z'$ is 0. See Figure \ref{fig:claim4}. Since the graph $G-z_0-w_0$ is connected, there is a path $P_1$ from $z_2$ to $y_0$. We can assume that this uses forbidden pairs of edges. Without loss of generality let $av_1$ and $v_1b$ be the first forbidden pair of edges $P_1$ uses when starting from $z_2$. Since $v_1$ is adjacent to $y_0$ it follows that $b=y_0$, since otherwise we can find a path from $y_0$ to $z_2$ not using any forbidden pair of edges. This shows that there is some path from $y_0$ to a vertex in $N(z_0) \cap V(H)$ only using one forbidden pair of edges. Now we look at all such paths only using one pair of forbidden edges $y_0v_i$ and $v_ia$ (for $i \in \{1,...,m\}$) and choose a path $P$ among those which goes through the most neighbours of $w_0$. Let $y_0v_i$ and $v_ia$ be the pair of forbidden edges $P$ contains. Since $y_0v_i$ and $v_ia$ is a forbidden pair, the vertex $v_i$ has a neighbour $v_i' \neq w_0$ in $H$ such that $v_iv_i'$ has weight 1. Since $G-w_0-v_i$ is connected it has a path $P'$ from $v_i'$ to a vertex in $N(z_0) \cap V(H)$. The path $P'$ must use a forbidden pair of edges, otherwise the graph induced by $E(P) \cup E(P')$ contains a desired path from $y_0$ to a vertex in $N(z_0) \cap V(H)$ avoiding forbidden pairs of edges. Let the first pair of forbidden edges $P'$ uses when starting from $v_i'$ be $bv$ and $vc$. The subpath $P_1'$ of $P'$ from $v_i'$ to $v$ must be disjoint from $P$, since otherwise the graph induced by $E(P) \cup E(P_1')$ contains a desired path from $y_0$ to $N(Z_0) \cap V(H)$ avoiding forbidden pairs of edges. Furthermore, we must have that $c=y_0$ since otherwise the path $P''$ defined to be $y_0v$ together with the subpath of $P'$ from $v$ to $v_i'$ followed by $v_i'v_i$ and the subpath of $P$ from $v_i$ to $N(z_0) \cap V(H)$ is a desired path from $y_0$ to $N(z_0) \cap V(H)$ avoiding forbidden pairs of edges. Now the path $P''$ contradicts the maximality of $P$. \\ 
This takes care of Subcase 2 if $z_0$ has a neighbour in some component $C$ other than $H$ in $G-z_0-y_0$. If this is not the case then, as noted above, we can go to Subcase 3 redefining $w_0$ to be $z_0$.
\begin{figure}[H]
\centering
\includegraphics[scale=1]{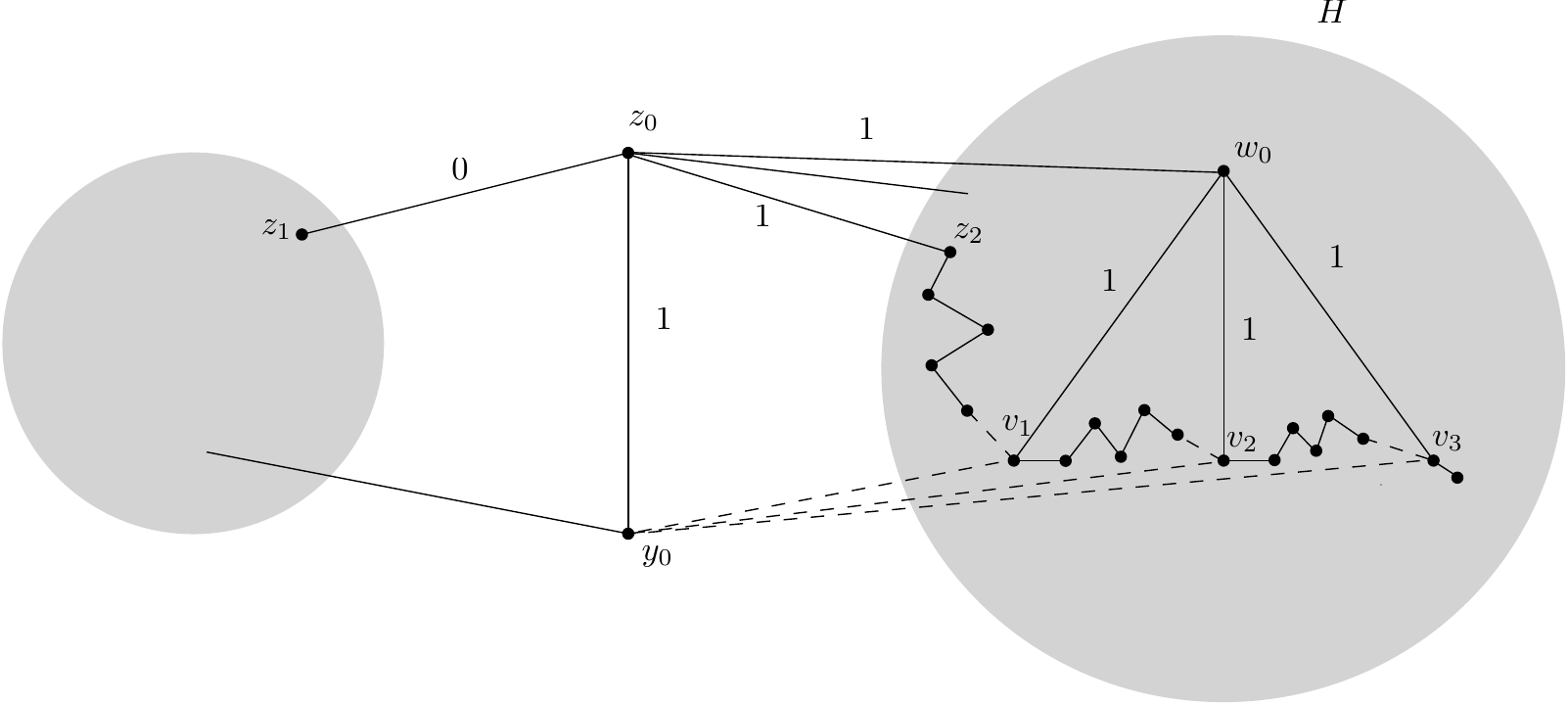}
\caption{An illustation of a situation in the proof of Claim 4. Dashed edges indicate pairs of forbidden edges.}
\label{fig:claim4}
\end{figure}
\noindent Subcase 3: The situation is more or less the same as in Subcase 2 except now $z_0=x_0$. If some vertex $w \in N(w_0)\backslash \{x_0\}$ has the same degree as $w_0$, then we can assume that we are in the exceptional case in Lemma~\ref{lem:remove-edges} when we remove $w$ and define $A$ to be the set of neighbours of $w$ with the same degree as $w$ (otherwise we redefine $w_0$ to be $w$ and go to Subcase 1 or 2). So in this case the degree of both $w_0$ and $w$ is 4 and so is the degree of all neighbours of $w$. Choose $w$ in $H$ to be a non-neighbour of $x_0$ with the same degree as $w_0$ such that the component arising when deleting $w$ and its neighbours (there are six components) containing $y_0$ or $z_0$ is maximal. The other components are easily seen to be isolated vertices, and this contradicts that the minimum degree in $H$ is 3. \\ 
So we can assume that $w_0$ is a strict local degree maximum in $V(B)\backslash \{x_0\}$ and $x_0$ has strictly greater degree than $w_0$ in $G$. This implies that $x_0$ has degree at least 5. Let $Y$ denote the bipartition set containing $w_0$ and let $X$ denote the opposite bipartition set. As before we find a $\{0,1\}$-weighting of $G$ where all edges incident to $w_0$ have weight 1, all vertices in $X$ have the same weighted degree parity as $w_0$, and all vertices in $Y\backslash \{w_0\}$ have weighted degree parity different from $w_0$. Now we can only have a conflict between $x_0$ and $w_0$. Recall that $x_0$ is only incident with two blocks $B$ and $B_1$. There must be precisely two neighbours $w_1$ and $w_2$ of $x_0$ in $B_1$, since otherwise we can avoid the conflict between $x_0$ and $w_0$ by swapping weights in a cycle in $B_1$ using two edges incident to $x_0$ with the same weight. By the same argument we can assume that the weights on the two edges $x_0w_1$ and $x_0w_2$ are different. Since $d(x_0) \geq 5$, this implies that $x_0$ must have at least two neighbours $w_3$ and $w_4$ in $B-w_0$ joined to $x_0$ by an edge weighted 1. The graph $B-x_0-w_0$ is connected by the minimality of $H$ so we can find a cycle in $B$ through the two edges $x_0w_3$ and $x_0w_4$ avoiding $w_0$. We swap the weights on this cycle and thereby avoid the conflict between $x_0$ and $w_0$.
\\ 
This completes Case 1. \\ \\
Since we can now assume that we are not in Case 1 we can go to Case 2 below by considering a longest path in the block-tree of $G^*$. \\ \\ 
Case 2: We can choose $B$ to be an end-block incident to endblocks $B_1,...,B_n$ where $n \geq 1$, and the union of all other blocks $B_{-1}$ satisfies that $B_{-1}-x_0$ is connected. \\ \\
In this case the proofs in Subcases 1 and 2 are exactly the same as before (the situation is now only different when $w_0$ is incident to $x_0$). For $i=-1,...,n$ define $G_i=B_i-x_0$. As before let $Y$ denote the bipartition set containing $w_0$ and let $X$ denote the opposite bipartition set. For $i=0,...,n$ let $w_i$ denote the vertex defined in the same way as $w_0$ just in $B_i$ instead of in $B$. As before we can assume that all neighbours of $w_i$ different than $x_0$ have strictly lower degree than that of $w_i$ and, furthermore, that $x_0$ has precisely two neighbours $v_{i,1}$ and $v_{i,2}$ in each $G_{-1},...,G_n$. We can assume that $w_i=v_{i,1}$ for $i=0,1,...,n$ and that the degree of $v_{i,2}$ is at most that of $w_i$, since otherwise we redefine $w_i$ to be $v_{i,2}$. For each $i=-1,...,n$, let $X_i$ and $Y_i$ denote the part of $G_i$ belonging to $X$ and $Y$ respectively. We can assume that we will get a conflict between $x_0$ and $w_i$ whenever we weight as before giving $w_i$ maximal weighted degree. As noted above, $x_0$ will get precisely weight 1 from each $G_j$ for $j \neq i$. So, for each $i=0,...,n$, the degree of $w_i$ is either $n+2$ or $n+3$.
We look at five different subcases:
\begin{enumerate}[label=(\alph*)]
\item $d(w_0)=n+2$ and $d(w_1)=n+3$ and $n$ is even.
\item $d(w_0)=n+2$ and $d(w_1)=n+3$ and $n$ is odd.
\item $d(w_i)=n+2$ for $i=0,1,...,n$.
\item $d(w_i)=n+3$ for $i=0,1,...,n$ and $n$ is even.
\item $d(w_i)=n+3$ for $i=0,1,...,n$ and $n$ is odd.
\end{enumerate}
(a): In this subcase $n$ is at least 2. Recall that when weighting $G$ as before giving $w_0$ weighted degree $n+2$ the vertex $x_0$ will have precisely one edge weighted 1 going to each $G_{-1},G_0,...,G_n$, and  when weighting $G$ as before giving $w_1$ weighted degree $n+3$, the vertex $x_0$ will get precisely weight 1 from all $G_{-1},G_0,G_2,G_3,...,G_n$ and weight 2 from $G_1$. The $\{0,1\}$-weighting giving $w_0$ maximum weighted degree implies that all the sets $Y_{-1}, Y_1, Y_2,...,Y_n$ have odd size and $Y_0$ has even size, since otherwise if $Y_i$ has even size for $i=-1,1,2,...,n$ or if $Y_0$ has odd size, then the subgraph of $G_i$ consisting of the edges with weight 1 has an odd number of vertices of odd degree. Similarly the $\{0,1\}$-weighting giving $w_1$ maximum weighted degree implies that all the sets $X_{-1},X_0,X_1,X_2,X_3,...,X_n$ have odd size. We find a proper $\{0,1\}$-weighting of $G$ as follows. For $i=-1,1,2,3,...,n$ we weight each $B_i$ by Lemma~\ref{lem:f-factor} such that all vertices in $X_i \cup {x_0}$ get odd weighted degree and all vertices in $Y_i$ get even weighted degree. We also find a $\{0,1\}$-weighting of $B_0$ such that all vertices in $Y_0$ get odd weighted degree and all vertices in $X_0 \cup \{x_0\}$ get even weighted degree. We can assume that $x_0$ gets weighted degree 2 (if the weighted degree of $x_0$ is 0 we swap the weights on a cycle containing the two edges incident to $x_0$). The union of these $\{0,1\}$-weightings gives a $\{0,1\}$-weighting of $G$ such that the only parity conflicts are between $x_0$ and its neighbours in $B_0$. However, the weighted degree of $x_0$ is $n+3$ while the neighbours of $x_0$ in $B_0$ have degree at most $n+2$.
\\ \\
(b): In this subcase $n$ is at least 3. By the same argument as in Subcase (a), all the sets $X_{-1}, X_1, X_2,...,X_n$ have odd size, $X_0$ has even size and all the sets $Y_{-1}, Y_0, Y_1, Y_2,...,Y_n$ have odd size. We find a proper $\{0,1\}$-weighting of $G$ as follows. For $i=-1,1,2,3,...,n$ we weight each $B_i$ by Lemma~\ref{lem:f-factor} such that all vertices in $X_i \cup {x_0}$ get odd weighted degree and all vertices in $Y_i$ get even weighted degree. We also find a $\{0,1\}$-weighting of $B_0$ such that all vertices in $Y_0 \cup \{x_0\}$ get even weighted degree and all vertices in $X_0$ get odd weighted degree. As in Subcase (a) we can assume that the weighted degree of $x_0$ is 2. The union of these $\{0,1\}$-weightings gives a proper $\{0,1\}$-weighting of $G$ (analogously to Subcase (a)). 
\\ \\
(c): First assume that $n$ is even. Then $n$ is at least 2. As before we deduce from the $\{0,1\}$-weighting of $G$ where $w_0$ gets weighted degree $n+2$ that all the sets $Y_{-1}, Y_1, Y_2,...,Y_n$ have odd size and $Y_0$ has even size. The same argument for the $\{0,1\}$-weighting of $w_1$ shows  that all the sets $Y_{-1}, Y_0, Y_2,...,Y_n$ have odd size and $Y_1$ has even size, a contradiction. An analogous argument holds when $n$ is odd.
\\ \\
(d): In this subcase $n$ is at least 2 and all the sets $X_{-1},X_0,X_1,X_2,X_3,...,X_n$ have odd size. We weight each $B_i $ for $i=0,1,2$ such that $w_i$ gets maximum weighted degree, $x_0$ gets weighted degree 2 and there are only parity conflicts around $w_0$. In all other blocks $B_j$, $j \neq i$, we weight such that all vertices in $X_j \cup \{x_0\}$ get odd weighted degree and all other vertices get even weighted degree. The union of these $\{0,1\}$-weightings gives a proper $\{0,1\}$-weighting of $G$. \\ \\
(e): In this subcase $n$ is at least 1 and all the sets $Y_{-1},Y_0,Y_1,Y_2,...,Y_n$ have odd size. One of the sets $X_{-1}, X_0,...,X_n$ must have even size. If $X_{-1}$ has even size we weight as follows: In $B_{-1}$ we weight such that all vertices in $X_{-1}$ get odd weighted degree and all vertices in $Y_{-1} \cup \{x_0\}$ get even weighted degree and, furthermore, such that $x_0$ has weighted degree 2. In $B_0$ we weight such that $w_0$ gets maximum weighted degree and all vertices in $X_0 \cup \{w_0, x_0\}$ get even weighted degree and all vertices in $Y_0 - \{w_0\}$ get odd weighted degree and, furthermore, such that the degree of $x_0$ is 2. In all other blocks $B_j$, $j \neq -1,0$ we weight such that all vertices in $Y_j \cup \{x_0\}$ get odd weighted degree and all other vertices get even weighted degree. The union of these $\{0,1\}$-weightings gives a proper $\{0,1\}$-weighting of $G$. \\
Hence we can assume that $X_{-1}$ has odd size. One of $X_0, X_1,..., X_n$, say, $X_0$ has even size and we now weight as follows: In $B_0$ we weight such that all vertices in $X_0$ get odd weighted degree and all vertices in $Y_0 \cup \{x_0\}$ get even weighted degree and, furthermore, such that $x_0$ has weighted degree 2. In $B_1$ we weight such that:
\begin{itemize}
\item $w_1$ gets maximum weighted degree.
\item All vertices in $X_1 \cup \{w_1, x_0\}$ get even weighted degree.
\item All vertices in $Y_1 - \{w_0\}$ get odd weighted degree.
\item The weighted degree of $x_0$ is 2.
\end{itemize}
In $B_{-1}$ we weight such that all vertices in $X_{-1} \cup \{x_0\}$ get odd weighted degree and all vertices in $Y_{-1}$ get even weighted degree. In all other blocks $B_j$, $j \notin \{-1,0,1\}$ we weight such that all vertices in $Y_j \cup \{x_0\}$ get odd weighted degree and all other vertices get even weighted degree. The union of these $\{0,1\}$-weightings gives a proper $\{0,1\}$-weighting of $G$.
\\ \\
This completes the proof of Claim 4. \\ \\
If the removal of any pair of adjacent vertices leaves a connected graph we must have that $G^*$ is 3-regular and we will simply work in $G^*$ from now on. Otherwise we choose to work in an endblock $B$ of $G^*$ and the subgraph $H$ of $B$ defined before Claim 4. By Claim 4, all vertices of $H$ have degree 3. Suppose first that all vertices in $H$ are adjacent to $z_0$ or $y_0$. A small argument shows that unless $B$ is isomorphic to $K_{3,3}$, there is a vertex $w_0 \in V(H)$, such that removing $w_0$ and all the neighbours $w_0$ would leave a connected graph. In this case we can find a proper $\{0,1\}$-weighting of $G$ by Lemma~\ref{lem:f-factor}. If $B$ is isomorphic to $K_{3,3}$ we remove all vertices in $B$ except $x_0$. The resulting subgraph of $G$ has an odd number of vertices so by Lemma~\ref{lem:f-factor} it has a proper $\{0,1\}$-weighting without parity-conflicts. Some edges in $B$ may be blue, but it can be checked that no matter how these blue edges are arranged in $B$ this $\{0,1\}$-weighting can be extended to the whole of $G$. So we can assume that there is some vertex in $H$ not adjacent to $x_0$ or $z_0$.    \\ \\
The rest of the proof is as that of Theorem~\ref{thm:12oddmul} in~\cite{TWZ} (choose $w_0$ to be a non-neighbour of $z_0$ and $y_0$, in $H$). This completes the proof of the theorem. 
\end{proof}

\section{Trees without the \{0,1\}-property} \label{sec2}
In this section we will give a complete characterisation of all bad trees. The characterisation consists of a recursive construction using three other classes of trees with certain properties, and immediately gives a polynomial-time algorithm for recognising bad trees. We begin by defining these properties for general bipartite graphs. The first of these three classes is described as follows. Let $v$ be a vertex in a connected bipartite graph $G$ with an even number of vertices in each bipartition set. We say that $G$ is a $G_v(-)$-graph if there is no proper \{0,1\}-weighting of $G$ when the weighted degree of $v$ is increased by 1. This definition is motivated by the following easy proposition.
\begin{proposition} \label{prop:-tree}
Let $G$ be a graph and let $v$ be a vertex in $G$. Let $G'$ be the graph obtained from $G$ by adding two vertices $v_1$ and $v_2$ and the the edges $vv_1$ and $v_1v_2$. The graph $G$ is a $G_v(-)$-graph if and only if $G'$ is bad.  
\end{proposition}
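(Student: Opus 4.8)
The plan is to prove both implications directly by matching up proper $\{0,1\}$-weightings of $G'$ with $\{0,1\}$-weightings of $G$ in which the weighted degree of $v$ has been shifted by~$1$. Everything hinges on one elementary local observation about $G'$: since $v_2$ is a leaf whose only neighbour is $v_1$, and $v_1$ has degree~$2$ with neighbours $v$ and $v_2$, any proper $\{0,1\}$-weighting $w'$ of $G'$ satisfies $w'(vv_1)=1$. Indeed $w'(v_2)=w'(v_1v_2)$ while $w'(v_1)=w'(vv_1)+w'(v_1v_2)$, so $w'(v_1)\neq w'(v_2)$ forces $w'(vv_1)\neq 0$.

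For the ``only if'' direction I would prove the contrapositive. Suppose $w'$ is a proper $\{0,1\}$-weighting of $G'$ and let $w$ be its restriction to $E(G)$. No vertex of $G$ other than $v$ is incident in $G'$ with one of the two new edges, so each $u\in V(G)\setminus\{v\}$ has the same weighted degree under $w$ as under $w'$, while $v$ picks up exactly the edge $vv_1$, of weight~$1$ by the observation above. Hence the vertex-colouring that $w'$ induces on $V(G)$ is exactly $C_w(v,1)$, and since $w'$ is proper on every edge of $G\subseteq G'$ this colouring is proper; that is, $w$ is a proper $\{0,1\}$-weighting of $G$ when the weighted degree of $v$ is increased by~$1$, so $G$ is not a $G_v(-)$-graph.

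For the ``if'' direction I would again argue the contrapositive: assuming $G$ is not a $G_v(-)$-graph, show $G'$ is not bad. Since $G$ is connected and bipartite (the ambient setting), failing to be a $G_v(-)$-graph means either some bipartition class of $G$ has odd size or $G$ admits a proper $\{0,1\}$-weighting $w$ with the weighted degree of $v$ increased by~$1$. In the first case the corresponding class of $G'$ has even size --- attaching the pendant path $v\,v_1\,v_2$ enlarges each class by one --- so $G'$ has the $\{0,1\}$-property by the remark following Lemma~\ref{lem:f-factor}, hence is not bad. In the second case, extend $w$ to $G'$ by putting $w'(vv_1)=1$ and choosing $w'(v_1v_2)\in\{0,1\}$ with $w'(v_1v_2)\neq w(v)$ (possible because $\{0,1\}$ has two elements). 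Then $w'$ induces $C_w(v,1)$ on $V(G)$, so it is proper on every edge of $G$; and $w'(v_1)=1+w'(v_1v_2)$ differs from $w'(v_2)=w'(v_1v_2)$ and from $w'(v)=w(v)+1$, the latter because $w'(v_1v_2)\neq w(v)$. Thus $w'$ is a proper $\{0,1\}$-weighting of $G'$.

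This is genuinely an ``easy proposition'': there is no real obstacle. The only points that need a little attention are the forced value $w'(vv_1)=1$, the bookkeeping that identifies the colouring induced on $V(G)$ with $C_w(v,1)$, and the fact that being a $G_v(-)$-graph also bundles in the parity conditions on the bipartition classes --- which is exactly why the ``if'' direction needs the brief reduction through Lemma~\ref{lem:f-factor}.
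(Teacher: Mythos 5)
The paper states this proposition without proof (it is introduced only as an ``easy proposition''), and your argument is correct and supplies exactly the intended routine verification: the forced weight $w'(vv_1)=1$, the identification of the induced colouring on $V(G)$ with $C_w(v,1)$, and the parity case via the remark after Lemma~\ref{lem:f-factor}. Nothing further is needed.
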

\noindent The following two lemmas show a recursive way to construct new bad bipartite graphs from other bad bipartite graphs with vertices of degree 1. These two results hold for all bipartite graphs and not just for trees.
\begin{lemma} \label{lem: degree1vbridge}
Let $G$ be a simple connected bipartite graph without the $\{0,1\}$-property. If $v$ is a vertex of degree 1, and $v'$ is the unique neighbour of $v$, then all edges incident to $v'$ are bridges in $G$. 
\end{lemma}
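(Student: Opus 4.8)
The plan is to argue by contradiction: assuming some edge incident to $v'$ is not a bridge of $G$, I will construct a proper $\{0,1\}$-weighting of $G$, contradicting the hypothesis that $G$ does not have the $\{0,1\}$-property.

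I would first record two preliminary observations. Since $G$ is bad, the remark following Lemma~\ref{lem:f-factor} forces both bipartition sets $X$ and $Y$ to have odd size. Say $v\in X$ and $v'\in Y$. The edge $vv'$ is a pendant edge, hence a bridge, so any non-bridge edge incident to $v'$ has the form $e=v'u$ with $u\in X\setminus\{v\}$; assume such an $e$ exists. Because $v$ has degree $1$, the graph $G-v$ is connected, and an edge different from $vv'$ is a bridge of $G$ if and only if it is a bridge of $G-v$; in particular $e$ lies on a cycle $C$ of $G-v$.

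Next I would apply Lemma~\ref{lem:f-factor} to $G-v$ with the function $f$ equal to $1$ on $X\setminus\{v\}$ and $0$ on $Y$; the parity hypothesis holds because $\sum f=|X|-1$ is even. Weighting the edges of the resulting $f$-factor modulo $2$ with $1$ and all remaining edges with $0$ gives a $\{0,1\}$-weighting of $G-v$ in which every vertex of $X\setminus\{v\}$ has odd weighted degree and every vertex of $Y$, including $v'$, has even weighted degree. If the weighted degree of $v'$ is $0$, then every edge at $v'$ (in particular $e$) has weight $0$, and I would swap all weights along the cycle $C$: this turns $e$ into a weight-$1$ edge while leaving the parity of every weighted degree unchanged. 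After this possible modification I have a $\{0,1\}$-weighting of $G-v$ with the same parities as before in which $v'$ has even weighted degree at least $2$.

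Finally I would put $v$ back and set $w(vv')=0$. Then $v$ has weighted degree $0$, the weighted degree of $v'$ is unchanged and still even and at least $2$, every vertex of $X\setminus\{v\}$ has odd weighted degree, and every vertex of $Y\setminus\{v'\}$ has even weighted degree. Hence adjacent vertices are separated by parity, except possibly along $vv'$, where $0$ differs from the weighted degree of $v'$, which is at least $2$. This is a proper $\{0,1\}$-weighting of $G$, the desired contradiction. I expect the only subtle point to be the cycle-swap step, which requires checking that reversing all weights along a cycle preserves every weighted-degree parity and raises the weighted degree of $v'$ from $0$ to at least $2$; the remainder is routine parity bookkeeping.
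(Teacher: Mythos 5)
Your proposal is correct and follows essentially the same route as the paper's (much terser) proof: weight $G-v$ via Lemma~\ref{lem:f-factor} so that only parity separates neighbours, extend by giving $vv'$ weight $0$, and note that the only possible failure is $v'$ having weighted degree $0$, which a weight-swap along a cycle through $v'$ repairs. Your version simply fills in the parity bookkeeping and the cycle-swap details that the paper leaves implicit.
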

\begin{proof}
By Lemma~\ref{lem:f-factor} there is a $\{0,1\}$-weighting of $G-v$ with no parity conflicts. The only problem we can have in extending this $\{0,1\}$-weighting to $G$ is that the weighted degree of $v'$ might be 0. If $v'$ is contained in a cycle we would always be able to avoid this.
\end{proof}
\begin{lemma} \label{lem:degree1allbad}
Let $G$ be a simple connected bipartite graph and assume that $v$ is a vertex of degree 1. Let $v'$ denote the neighbour of $v$ and let $e_0,e_1,...,e_n$ be the edges incident to $v'$ where $e_0=vv'$. Assume that all edges incident to $v'$ are bridges and for each $i>0$, let $G_i$ be the unique component of $G-e_i$ not containing $v$. For each $i>0$, let $G_i'$ denote the connected graph obtained from $G_i$ by adding the vertices $v, v'$ and the edges $e_0,e_i$. The graph $G$ is bad if and only if all the graphs $G_1',...,G_n'$ are bad.
\end{lemma}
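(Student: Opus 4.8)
The plan is to exploit the hypothesis that every edge at $v'$ is a bridge: this forces $v'$ to be a cut-vertex and makes $G$ the disjoint union of $\{v,v'\}$ with the pieces $G_1,\dots,G_n$, the only edge leaving $V(G_i)$ being $e_i=v'u_i$; also $v$ and every $u_i$ lie in the bipartition class opposite $v'$. I will use throughout the language of a weighting that is \emph{proper when the degree of $u$ is increased by $\beta$} (as defined just before Lemma~\ref{lem:2}), together with the following reformulation of the graphs $G_i'$: \emph{$G_i'$ has the $\{0,1\}$-property if and only if $G_i$ admits a $\{0,1\}$-weighting that is proper when the degree of $u_i$ is increased by $1$.} Indeed, in any proper weighting of $G_i'$ the edge $e_i$ must get weight $1$ (otherwise $v$ and $v'$ clash, both having weighted degree $w(e_0)$), and restricting to $G_i$ gives such a weighting; conversely such a weighting of $G_i$ extends to $G_i'$ by putting weight $1$ on $e_i$ and choosing the weight of $e_0$ in $\{0,1\}$ distinct from the weighted degree of $u_i$ inside $G_i$ whenever the latter is $0$ or $1$.

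First I would settle the easy implication: if all $G_i'$ are bad, then $G$ is bad. Suppose $w$ were a proper $\{0,1\}$-weighting of $G$. Since $w(v)=w(e_0)$ and $w(v')=w(e_0)+\sum_{l\geq1}w(e_l)$ and $e_0$ is not a conflict, some $w(e_j)$ equals $1$. Then the restriction of $w$ to $E(G_j)$ is a $\{0,1\}$-weighting of $G_j$ that is proper when the degree of $u_j$ is increased by $1$, because the weighted degree of $u_j$ in $G$ is exactly its weighted degree inside $G_j$ plus $w(e_j)=1$, and $w$ is proper on all of $G$. Hence $G_j'$ has the $\{0,1\}$-property, contradicting the assumption.

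The opposite implication is the substantial half: if some $G_j'$ is not bad, I must build a proper $\{0,1\}$-weighting of $G$. Fix a $\{0,1\}$-weighting $w_0$ of $G_j$ that is proper when the degree of $u_j$ is increased by $1$; put $w=w_0$ on $E(G_j)$, $w(e_j)=1$, and set the target parity $\pi\equiv w_0(u_j)+1\pmod 2$. The key tool is a flexibility statement: \emph{for every connected bipartite graph $H$ with a marked vertex $u$ in class $P$ (with $Q$ the other class) and every $\pi\in\{0,1\}$, there are a $\{0,1\}$-weighting $w_H$ of $H$ and a $\beta\in\{0,1\}$ such that $w_H$ is proper when the degree of $u$ is increased by $\beta$, and $w_H(u)+\beta\equiv\pi\pmod 2$.} To prove this, let $f$ be $\pi$ on $P\setminus\{u\}$ and $1-\pi$ on $Q$, and let $f(u)\in\{0,1\}$ be the value making $\sum_x f(x)\equiv 0\pmod 2$; by Lemma~\ref{lem:f-factor} there is an $f$-factor modulo $2$, and weighting its edges $1$ and all others $0$ yields a weighting whose only possible conflicts are between $u$ and its neighbours (all of which lie in $Q$). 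Taking $\beta=0$ if $f(u)=\pi$ and $\beta=1$ if $f(u)=1-\pi$ removes those conflicts and gives $u$ effective parity $\pi$. This also survives the degenerate cases where $H$ is a single vertex or $H\cong K_2$ (in the latter there is no proper weighting of $H$ at all, but bumping $u$ by $\beta=1$ saves the day).

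Applying the flexibility statement to each $G_i$ with $i\ne j$ and the same target $\pi$, I obtain weightings $w_i$ and values $\beta_i$; I define $w=w_i$ on $E(G_i)$ and $w(e_i)=\beta_i$, so that the weighted degree of $u_i$ in $G$ equals $w_i(u_i)+\beta_i\equiv\pi$ for every $i$ (including $i=j$). Finally I pick $w(e_0)\in\{0,1\}$ to be the unique value making the parity of $w(v')=w(e_0)+\sum_{l\geq1}w(e_l)$ equal to $1-\pi$. Verifying that $w$ is proper is then routine: an edge of $G_i$ not at $u_i$ is fine because $w_i$ is proper there; an edge $u_iz$ inside $G_i$ is fine because $w_i$ is proper when $u_i$'s degree is increased by $\beta_i=w(e_i)$; each $e_i=v'u_i$ is fine because $w(v')$ and $w(u_i)$ have opposite parity; and $e_0=vv'$ is fine because $\sum_{l\geq1}w(e_l)\geq w(e_j)=1\neq0$. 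I expect the only real obstacle to be the flexibility statement, specifically making it robust across all connected bipartite $G_i$ including the tiny ones; the device of letting $f(u)$ absorb the parity obstruction $\sum f\equiv 0$ and then compensating with $\beta$ is what keeps the argument uniform.
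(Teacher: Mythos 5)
Your proof is correct, and while the easy direction (all $G_i'$ bad $\Rightarrow$ $G$ bad) coincides with the paper's argument via Proposition~\ref{prop:-tree}, your treatment of the converse is organised quite differently. The paper assumes $G$ is bad and extracts structural information: using Lemma~\ref{lem:f-factor} on $G-v$ it first forces $w(v')=0$ in a canonical weighting, deduces that every $X_i$ has even size, then rules out the case where two or more $Y_i$ have odd size by an explicit construction, concludes that all $Y_i$ are even, and only then applies Proposition~\ref{prop:-tree} to each branch. You instead prove the contrapositive constructively: a single ``flexibility'' lemma (for any connected bipartite $H$, marked vertex $u$ and target parity $\pi$, there is a weighting proper when $u$'s degree is increased by some $\beta\in\{0,1\}$ with $u$'s effective parity equal to $\pi$) lets the weight $\beta_i=w(e_i)$ absorb whatever parity obstruction each branch presents, so you never need to determine the parities of the $X_i$ and $Y_i$ at all; the choice of $w(e_0)$ then fixes $v'$, and $w(e_j)=1$ separates $v$ from $v'$. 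Your route is more uniform and self-contained -- it handles the degenerate branches ($K_1$, $K_2$) by the same device, and it sidesteps the parity hypothesis built into the definition of a $G_v(-)$-graph, which the paper's first direction technically glosses over -- while the paper's route yields as a byproduct the fact that every branch has both bipartition classes of even size, information that is reused in the later structural lemmas of Section~3. Both proofs ultimately rest on Lemma~\ref{lem:f-factor} and on the observation that after an $f$-factor weighting the only possible conflicts sit at the marked vertex.
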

\begin{proof}
Figure~\ref{fig:1degreev} shows an illustration of the situation. For $i \in \{1,...,n\}$, let $v_i$ be the vertex of $G_i$ which is adjacent to $v'$ in $G_i'$. If all $G_1',...,G_n'$ are bad then by Proposition~\ref{prop:-tree} each $G_i$ is a $G_{v_i}(-)$-graph. It follows that in any proper $\{0,1\}$-weighting of $G$, each edge $e_i$ must receive weight 0. But now $v$ and $v'$ have the same weighted degree. Thus no such $\{0,1\}$-weighting of $G$ exists, that is, $G$ is bad. \\
Now assume that $G$ is bad. Let $X, Y$ denote the bipartition sets of $G$ such that $v \in X, v' \in Y$ and for each $i=1,...,n$,  let $X_i, Y_i$ denote the bipartition sets of $G_i$. By Lemma~\ref{lem:f-factor} we can assume that both $X$ and $Y$ have odd size.  By Lemma~\ref{lem:f-factor} there is a  $\{0,1\}$-weighting of $G-v$ with no parity conflicts, where all vertices in $X-v$ get odd degree and all vertices in $Y$ get even degree. The only problem we can have in extending this $\{0,1\}$-weighting to $G$ is that the weight of $v'$ can be 0. If this is the case then all $X_i$ have even size. There must be an even number $m$ of the sets $Y_i$ which have an odd number of vertices. If $m \geq 2$, say $Y_1,...,Y_m$ have even size, then by Lemma~\ref{lem:f-factor} there is a proper $\{0,1\}$-weighting of $G$ where $v'$ gets weighted degree $m+1$ (apply Lemma \ref{lem:f-factor} to $G-v$ to find a $\{0,1\}$-weighting of $G-v$ where all vertices in $Y \setminus \{v'\}$ get odd weighted degree and all vertices in $(X-\{v\}) \cup \{v'\}$ get even weighted degree. In such a weighting the weights on all the edges $e_1,...,e_m$ are 1 and the weights on the other $e_i$'s are zero. Now extend this weighting to the whole of $G$ by assigning weight 1 to $e_0=vv'$.) This contradicts that $G$ is bad. So all $Y_i$'s have even size. 
By Proposition \ref{prop:-tree} each $G_i'$ is bad if and only if $G_i$ is bad when the weight on the vertex incident to $e_i$ is increased by 1. So for a contradiction assume that there is a proper $\{0,1\}$-weighting of some $G_i$ when the weight on the vertex incident to $e_i$ is increased by 1. By use of Lemma \ref{lem:f-factor} this proper $\{0,1\}$-weighting can now easily be extended to $G$, a contradiction.
\end{proof}
\begin{figure} [H]
\centering
\includegraphics[scale=1]{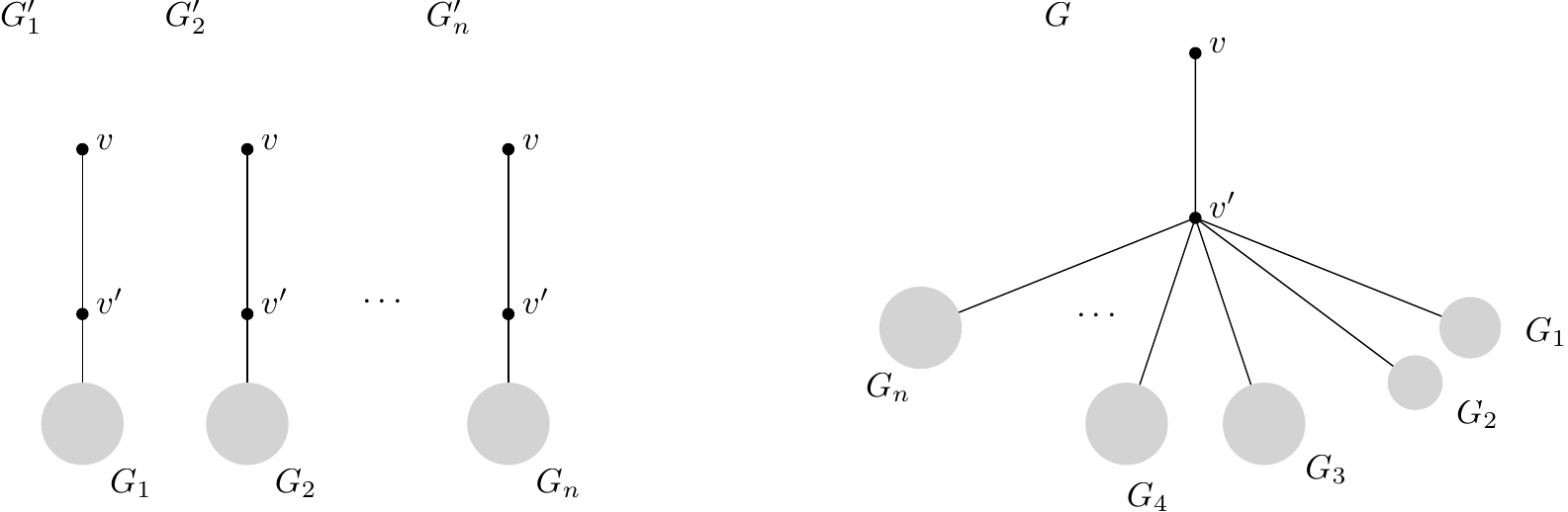}
\caption{An illustration of the situation explained in Lemma~\ref{lem:degree1allbad}.}
\label{fig:1degreev}
\end{figure}
\noindent We now describe the second and third class of trees we will use to characterise all bad trees. They are special cases of the graphs defined as follows. \\ 
Let $v$ be a vertex in a connected bipartite graph $G$ with an odd number of vertices and let $a,b$ be two non-negative integers. We say that $G$ is a $G_v(a,b)$\textit{-graph} if $v$ must get weighted degree $a$ in all proper $\{0,1\}$-weightings of $G$ and $v$ must get weighted degree $b$ in all proper $\{0,1\}$-weightings of $G$ where the weight of $v$ is increased by 1. \\
The classes of $G_v(s,s+1)$- and $G_v(s,s+3)$-trees where $s$ is a non-negative integer are two interesting special cases when we want to characterise all bad trees. We will need the following two lemmas describing the local structure around $v$ in a $G_v(s,s+1)$- and a $G_v(s,s+3)$-tree.
\begin{lemma} \label{lem:x,x+3tree}
Let $s$ be a non-negative integer and let $G$ be a $G_v(s,s+3)$-tree. Then $G$ is obtained from the disjoint union of a $G_{v_1}(s+1,s+2)$- and a $G_{v_2}(s+1,s+2)$-tree together with a number of trees of type $G_{v_3}(-), G_{v_4}(-),...,G_{v_m}(-) $ and bad trees $G_{m+1},...,G_{m+s}$  by adding a vertex $v$ and all edges $vv_1, vv_2, ..., vv_m$ and also an edge from $v$ to all the bad trees $G_{m+1},...,G_{m+s}$.
\end{lemma}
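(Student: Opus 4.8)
The plan is to analyze the structure of a $G_v(s,s+3)$-tree $T$ by examining the edges incident to $v$ and the components obtained by deleting $v$. Write $N(v) = \{v_1, \dots, v_k\}$ and for each $i$ let $T_i$ be the component of $T - v$ containing $v_i$; since $T$ is a tree, $T = \{v\} \cup \bigcup_i T_i$ with exactly one edge $vv_i$ joining $v$ to each $T_i$. In any proper $\{0,1\}$-weighting $w$ of $T$, the weighted degree of $v$ equals the number of edges $vv_i$ receiving weight $1$; by definition this number must be exactly $s$ for every proper weighting, and must be exactly $s+3$ for every proper weighting of $T$ with the weight of $v$ increased by $1$. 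I will classify each pair $(T_i, v_i)$ according to which weights the edge $vv_i$ can take in a proper weighting, and in a proper weighting with $v$'s weight bumped by $1$.

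First I would set up a parity count. Since $T$ has an odd number of vertices (part of the definition of a $G_v(a,b)$-graph), exactly one bipartition class is odd-sized, and Lemma~\ref{lem:f-factor} controls which vertices can be forced to odd/even weighted degree inside each $T_i$. The key local dichotomy for a single pair $(T_i,v_i)$: either the edge $vv_i$ is \emph{forced to $0$} in all proper weightings of $T$ regardless of $v$'s behaviour — this will turn out to happen precisely when $T_i$ is a $G_{v_i}(-)$-tree (by Proposition~\ref{prop:-tree}, equivalently when attaching a pendant path of length $2$ at $v_i$ makes the graph bad), or attaching $v$ behaves like attaching a bad tree via a single edge (the components $G_{m+1},\dots,G_{m+s}$, each contributing a forced weight $1$ on its connecting edge, hence accounting for the "$s$" in both $s$ and $s+3$) — or the pair is "flexible". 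A flexible pair is one where $vv_i$ can be weighted either $0$ or $1$; I claim exactly two such flexible pairs occur, and each is a $G_{v_i}(s+1,s+2)$-tree. The arithmetic is: the $s$ bad components always force weight $1$; the $G_{v_j}(-)$ components always force weight $0$; to reach weighted degree exactly $s$ in every proper weighting we need the remaining edges to contribute $0$, and to reach $s+3$ when $v$ is bumped we need them to contribute $3$ — but one of those $3$ is the artificial $+1$ on $v$ itself, so the remaining flexible edges must jump from contributing $0$ to contributing $2$, i.e. there are exactly two flexible pairs and bumping $v$ flips both from $0$ to $1$. The condition that flipping $vv_j$ from $0$ to $1$ is forced exactly when $v$'s weight increases is precisely the statement that $T_j$ is a $G_{v_j}(s+1,s+2)$-tree: inside $T_j$, vertex $v_j$ sees weighted degree $s+1$ from $T_j$'s edges when $vv_j=0$ (so $v_j$'s total is $s+1$, avoiding conflict with $v$'s total $s$... wait, need the genuine count) — more precisely, the value $v_j$ must take forces the edge behaviour, and consistency across the two scenarios pins the pair of values to $(s+1,s+2)$.

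Concretely, the steps in order: (1) delete $v$, obtain the components $T_1,\dots,T_k$; (2) for each $i$, determine the set of weights $vv_i$ can carry in a proper weighting of $T$, and in a proper weighting with $v$ bumped, using Lemma~\ref{lem:f-factor} to realize any parity-consistent weighting inside $T_i$; (3) use the forcing conditions defining $G_v(s,s+3)$ together with the count above to show there are exactly two flexible components, exactly $s$ bad components attached by single edges, and the rest are $G_{v_i}(-)$-trees; (4) identify the two flexible components as $G_{v_1}(s+1,s+2)$- and $G_{v_2}(s+1,s+2)$-trees by checking that the forced values of $v_1,v_2$ in the two scenarios are exactly $s+1$ and $s+2$; (5) conversely (or as a consistency check), verify that this data indeed reconstructs a $G_v(s,s+3)$-tree, though the lemma as stated only asserts the "obtained from" direction.

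The main obstacle I anticipate is step (3)–(4): ruling out the degenerate configurations. One has to be careful that a component is not simultaneously "flexible" and forced, that bumping $v$ cannot cause a flexible edge to become forced to $0$ (which would break the $+2$ count), and that no component can independently absorb parity slack in a way that decouples the edge weight from $v$'s value. This requires a careful case analysis on the sizes of the bipartition classes $X_i, Y_i$ of each $T_i$ (odd vs. even) and on whether $v_i$ lies in the class of $T_i$ that can or cannot be forced to a given parity by Lemma~\ref{lem:f-factor}; the identity that $T$ has odd order, hence $\sum_i |T_i|$ is even, is what forces the parity bookkeeping to close up and yields exactly two flexible components rather than zero or some other even number. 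I expect this to be the only place where real work is needed; everything else is the definitional unwinding of $G_v(a,b)$ and $G_v(-)$ combined with repeated application of Lemma~\ref{lem:f-factor}.
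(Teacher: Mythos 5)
Your overall skeleton matches the paper's: delete $v$, classify the components $T_i$ of $T-v$ by the parities of their bipartition classes, and use Lemma~\ref{lem:f-factor} to force the weights on the edges $vv_i$ in parity-canonical weightings. The identification of the ``both even'' components as $G_{v_i}(-)$-trees and the ``both odd'' components as bad trees is also the route the paper takes. However, there is a genuine gap exactly where you anticipate one, and your arithmetic does not close it. Your three-way classification (forced to $0$ in both scenarios / forced to $1$ in both scenarios / flexible) is incomplete: the parity analysis actually yields \emph{four} types of components, and the missing type --- $|X_i|$ odd, $|Y_i|$ even, whose edge $vv_i$ is forced to weight $1$ in proper weightings of $T$ but to weight $0$ in bumped proper weightings --- is precisely what makes the counting nontrivial. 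If $n_1$ denotes the number of (even, odd) components and $n_2$ the number of (odd, even) components, the two parity counts only give $n_2+s'=s$ and $n_1+s'+1=s+3$, hence $n_1=n_2+2$; the lemma needs $n_2=0$ and $n_1=2$, and nothing in your ``contributes $0$, then contributes $2$, hence exactly two flexible pairs'' argument rules out $n_2>0$. Moreover, even restricted to your own framework, ``the flexible edges contribute total $0$ unbumped and total $2$ bumped'' does not imply there are exactly two of them (and if the total is $0$ in \emph{every} proper weighting, no edge in that group is ever weighted $1$ unbumped, so ``flexible'' in your sense collapses).

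The paper closes this gap with a construction you would need to supply: assuming $n_1>2$, for each $j\le n_1$ one builds a weighting in which $vv_j$ is the only edge among the first $n_1$ with weight $1$ and the only possible conflict is $vv_j$; the $G_v(s,s+3)$ hypothesis then forces each $v_j$ to have bumped weighted degree $s'+n_1$, and from these forced values one assembles a proper weighting of $T$ with weight $1$ on all $n_1+n_2$ of those edges, giving $v$ weighted degree $s'+n_1+n_2\neq s$, a contradiction. A similar explicit construction (weight everything so that the only possible conflict is $vv_1$, observe it must be an actual conflict, and read off the forced value of $v_1$) is what pins the two remaining components to type $G_{v_i}(s+1,s+2)$; your step (4) gestures at this but, as your own ``wait, need the genuine count'' indicates, the values $(s+1,s+2)$ have to be derived from such forced-conflict weightings rather than from consistency alone.
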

\begin{proof} 
Figure~\ref{fig:a3atreelemma} illustrates the situation. Assume that $s$ is even (the case where $s$ is odd is similar). Let $X,Y$ be the bipartition sets of $G$ where $v \in Y$. Let $e_1,...,e_n$ denote the edges incident to $v$ and let $G_1,...,G_n$ denote the corresponding components of $G-v$. Let $X_i, Y_i$ denote the bipartition sets of $G_i$. Let $s'$ denote the number of trees with an odd number of vertices in both bipartition sets and let $m'$ denote the number trees with an even number of vertices in both bipartition sets among $G_1,...,G_n$. Let $n_1$ denote the number of trees among $G_1,...,G_n$ that have an even number of vertices in their $X$-bipartition and an odd number of vertices in their $Y$-bipartition, and assume that the ordering of $G_1,...,G_n$ is such that $G_1, ..., G_{n_1}$ denote these trees. Let $n_2$ be the number of trees among $G_1,...,G_n$ that have an odd number of vertices in their $X$-bipartition and an even number of vertices in their $Y$-bipartition and assume that the ordering of $G_1,...,G_n$ is such that $G_{n_1+1}, ..., G_{n_1+n_2}$ denote these trees. Furthermore assume that the trees $G_{n_1+n_2+1},...,G_{n_1+n_2+s'}$ have an odd number of vertices in both bipartition sets and that the trees $G_{n_1+n_2+s'+1},...,G_{n_1+n_2+s'+m'}=G_n$ have an even number of vertices in both bipartition sets. For $i=1,...,n$ let $v_i$ be the neighbour of $v$ in $G_i$. 
\\
Since $|V(G)|$ is odd, one of $|X|$, $|Y|$ is even. However, if $|Y|$ is even, then by Lemma~\ref{lem:f-factor}, $G$ has a proper $\{0,1\}$-weighting such that $v$ gets odd weighted degree. This contradicts $G$ being a $G_v(s,s+3)$-tree. Thus $|X|$ is even and $|Y|$ is odd, and $G$ has a $\{0,1\}$-weighting such that all vertices in $Y$ get even weighted degree and all vertices in $X$ get odd weighted degree. In such a $\{0,1\}$-weighting all the edges $vv_1,..., vv_{n_1}$ must get weight 0, since otherwise if say $vv_1$ is weighted 1, then the subgraph consisting of edges weighted 1 in $G_1$ has an odd number of odd degree vertices. By a similar argument, all the edges $vv_{n_1+1},...,vv_{n_1+n_2}$ get weight 1, all the edges $vv_{n_1+n_2+1},...,vv_{n_1+n_2+s'}$ also get weight 1 and all the edges $vv_{n_1+n_2+s'},...,vv_n$ get weight 0. It follows that $n_2+s'=s$. By Lemma~\ref{lem:f-factor}, there is also a $\{0,1\}$-weighting of $G$ where all vertices in $Y\backslash \{v\}$ get odd weighted degree and all vertices in $X$ get even weighted degree. This means that there is a $\{0,1\}$-weighting of $G$ where all vertices in $Y$ get odd weighted degree and all vertices in $X$ get even weighted degree when the weight on $v$ is increased by 1. We argue as before and see that in such a $\{0,1\}$-weighting all the edges $vv_1,..., vv_{n_1}$ must get weight 1, all the edges $vv_{n_1+1},...,vv_{n_1+n_2}$ get weight 0, all the edges $vv_{n_1+n_2+1},...,vv_{n_1+n_2+s'}$ get weight 1 and all the edges $vv_{n_1+n_2+s'},...,vv_n$ get weight 0. Since $G$ is a $G_v(s,s+3)$-tree it follows that $n_1+s'+1=s+3$ and hence $n_1=n_2+2$.
\\ 
We start by showing that all the trees $G_{n_1+n_2+s'+1},...,G_n$ must be trees of type $G_{v_j}(-)$. Assume that this is not the case and let $G_j$ be a tree among $G_{n_1+n_2+s'+1},...,G_n$ such that there is a $\{0,1\}$-weighting of $G[V(G_j) \cup \{v\}]$ where the weight on $vv_j$ is 1 and the only possible conflict is between $v$ and $v_j$. Now we weight $G-G_j$ as before such that all vertices in $Y-Y_j$ get odd weighted degree and all vertices in $X-X_j$ get even weighted degree when the weight on $v$ is increased by 1. We now put back $G_j$ and let $vv_j$ play the role of the extra weight on $v$ which then has weight $s+3$. The only possible conflict is between $v$ and $v_j$, and since $G$ is a $G_v(s,s+3)$-tree we must have a conflict, so $v_j$ will also get weighted degree $s+3$. Now we weight $G-G_j$ such that all vertices in $Y-Y_j$ get even weighted degree and all vertices in $X-X_j$ get odd weighted degree. Now we put back $G_j$ and let $vv_j$ play the role of an extra weight on $v$ and we also increase the weight 1 on $v$. The weight on $v$ is then $s+2$ and we have no conflicts anywhere, a contradiction. So all the trees $G_{n_1+n_2+s'+1},...,G_n$ must be trees of type $G_{v_j}(-)$. Similar arguments show all the trees $G_{n_1+n_2+1},...,G_{n_1+n_2+s'}$ must be bad trees. \\ 
It remains to show that $n_1=2$ and $n_2=0$, and that the two graphs $G_1$ and $G_2$ are trees of type $G_{v_1}(s+1,s+2)$ and $G_{v_2}(s+1,s+2)$. We start by showing that $n_1=2$ and $n_2=0$. Clearly $n_1 \geq 2$. First assume that $n_1 >2$ is even. For any $j=1,...,n_1$ there is a $\{0,1\}$-weighting of $G$ where the weight on $vv_j$ is 1, the weight on all $vv_i$ for $i \neq j$ and $i \in \{1,...,n_1\}$ is 0 and the weight on all edges $vv_{n_1+1},..., vv_{2n_1-1}$ is 1 and the only possible conflict when the weight on $v$ is increased by 1 is $vv_j$.  So for each $G_i$, $i=1,...,n_1$ the weight of $v_i$ must be $s'+n_2+2=s'+n_1$ when the weight on $v_i$ is increased by 1, otherwise there is a proper $\{0,1\}$-weighting of $G$ where the weight on $v$ is increased by 1 up to $s'+n_1$. But now there is a proper $\{0,1\}$-weighting of $G$ with weight 1 on all edges $vv_1, ..., vv_{n_1+n_2}$ such that $v$ gets weighted degree $s'+n_1+n_2$, and this contradicts $G$ being a $G_v(s,s+3)$-tree. The case where $n_1 > 2$ and $n_1$ is odd is similar. \\
We conclude that $n_1=2$ and $n_2=0$ and it remains to show that $G_1$ and $G_2$ are trees of type $G_{v_1}(s+1,s+2)$ and $G_{v_{2}}(s+1,s+2)$. By Lemma~\ref{lem:f-factor} there is a $\{0,1\}$-weighting of $G$ such that the weight on the edges $vv_2,...,vv_{2+s'}$ is 1 and the weight on the other edges incident to $v$ is 0, and where the only possible conflict is between $v$ and $v_1$. This must be a conflict since $G$ is a $G_v(s,s+3)$-tree. So the weighted degree of $v_1$ in any proper $\{0,1\}$-weighting of $G_1$ must be $s+1$. If we use the same $\{0,1\}$-weighting, except we now swap the weighted degree parities in the trees $G_3,...,G_n$  and increase the weighted degree of $v$ by 1 we can similarly conclude that the weighted degree of $v_2$ in any proper $\{0,1\}$-weighting of $G_2$ where the degree of $v_2$ is increased by 1 must be $s+2$. Interchanging $G_1$ and $G_2$ in the argument above implies that the weighted degree of $v_2$ in any proper $\{0,1\}$-weighting of $G_2$ must be $s+1$ and that the weighted degree of $v_1$ in any proper $\{0,1\}$-weighting of $G_1$ where the degree of $v_1$ is increased by 1 must be $s+2$. Hence $G_1$ and $G_2$ are trees of type $G_{v_1}(s+1,s+2)$ and $G_{v_{2}}(s+1,s+2)$.
\end{proof}
\begin{figure} [H]
\centering
\includegraphics[scale=1]{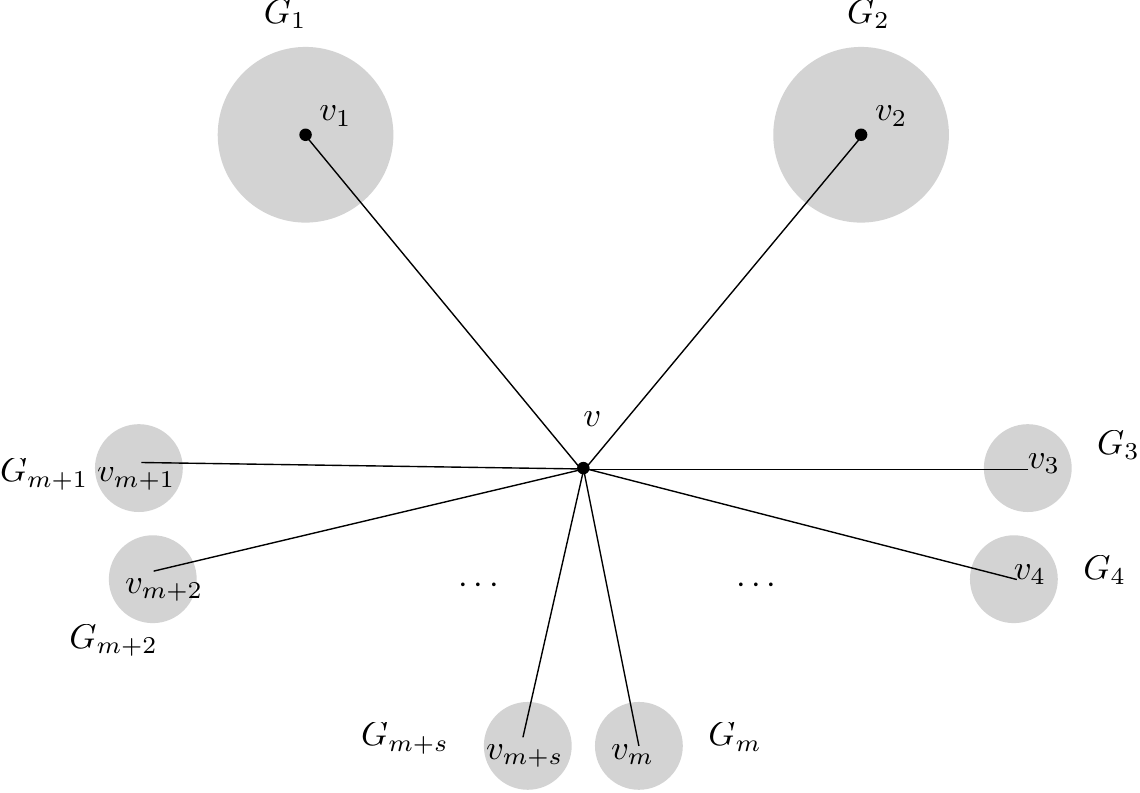}
\caption{An illustration of the situation explained in Lemma~\ref{lem:x,x+3tree}.}
\label{fig:a3atreelemma}
\end{figure}
\noindent Similarly to what we did in the proof of Lemma \ref{lem:x,x+3tree} we can describe the local structure around $v$ in a $G_v(s,s+1)$-tree.
\begin{lemma}\label{lem:x,x+1-tree}
If $G$ is a $G_v(s,s+1)$-tree, then either
\begin{enumerate}[label=(\alph*)]
\item $G$ is obtained from the disjoint union of a $G_{v_1}(s-1,s+2)$- and a $G_{v_2}(s,s+1)$-tree together with a number of trees of type $G_{v_3}(-), G_{v_4}(-),...,G_{v_m}(-)$ and bad trees $G_{m+1},...,G_{m+s-1}$ by adding a vertex $v$ and all edges $vv_1, vv_2, ..., vv_m$ and also an edge from $v$ to all the bad trees $G_{m+1},...,G_{m+s-1}$, or
\item  $G$ is obtained from the disjoint union of $s$ bad graphs $B_1, ..., B_s$ and a number of graphs of type $G_{v_1}(-), G_{v_2}(-),...,G_{v_n}(-)$  by adding a vertex $v$ and all edges $vv_1, vv_2, ..., vv_n$ and also bridges joining $v$ to each of the bad graphs.
\end{enumerate}
\end{lemma}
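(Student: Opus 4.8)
The plan is to run the proof of Lemma~\ref{lem:x,x+3tree} essentially verbatim; the only genuine change is in the arithmetic at the end, and it is precisely this arithmetic that splits the conclusion into cases~(a) and~(b). As in Lemma~\ref{lem:x,x+3tree} we may assume $s$ is even, the case $s$ odd being symmetric. Let $X,Y$ be the bipartition sets of $G$. Since $|V(G)|$ is odd one of them is even, and if the part containing $v$ were even then Lemma~\ref{lem:f-factor} would give a proper $\{0,1\}$-weighting in which $v$ has odd colour, contradicting that $v$ is forced to the even colour $s$. So $v\in Y$ with $|Y|$ odd and $|X|$ even. Write $e_1,\dots,e_n$ for the edges at $v$, let $G_1,\dots,G_n$ be the components of $G-v$ with bipartition sets $X_i\subseteq X$ and $Y_i\subseteq Y$, let $v_i$ be the neighbour of $v$ in $G_i$, and sort the $G_i$ into the same four classes used in Lemma~\ref{lem:x,x+3tree}: $n_1$ components with $|X_i|$ even and $|Y_i|$ odd; $n_2$ with $|X_i|$ odd and $|Y_i|$ even; $s'$ with both odd; and $m'$ with both even.

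Next comes the parity count. Apply Lemma~\ref{lem:f-factor} to $G$ to get a proper $\{0,1\}$-weighting with $Y$ uniformly even and $X$ uniformly odd; here $v$ has colour $s$ because it is forced. Counting, modulo $2$, the odd-degree vertices of the subgraph of $1$-weighted edges inside each $G_i$ (exactly as in Lemma~\ref{lem:x,x+3tree}) shows that in this weighting $e_i$ has weight $0$ when $G_i$ is of the $n_1$- or $m'$-type and weight $1$ when $G_i$ is of the $n_2$- or $s'$-type, so $n_2+s'=s$. Now apply Lemma~\ref{lem:f-factor} with the target function that is odd on $Y\setminus\{v\}$ and even elsewhere (permissible since $|Y|-1$ is even); this becomes, upon increasing the weight of $v$ by $1$, a proper $\{0,1\}$-weighting of the kind appearing in the ``$s+1$'' clause of the hypothesis, and the same count forces $e_i$ to weight $1$ on the $n_1$- and $s'$-types and to $0$ on the $n_2$- and $m'$-types, whence $n_1+s'=s$. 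Combining the two, $n_1=n_2$.

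The structural part is then handled as at the end of the proof of Lemma~\ref{lem:x,x+3tree}. By the extend-and-contradict scheme of that lemma (weight $G-G_i$ with a suitable parity pattern, put $G_i$ back with $e_i$ playing the role of an extra unit of weight at $v$, and use that $v$ is forced), each $m'$-type component is shown to be a $G_{v_i}(-)$-tree (a genuine condition, since having both parts even it does admit a proper $\{0,1\}$-weighting by Lemma~\ref{lem:f-factor}) and each $s'$-type component a bad tree; by the definitions of ``bad'' and of $G_{v_i}(-)$-tree, the edges $e_i$ to these components are forced to weight $1$ and $0$ respectively in \emph{every} proper $\{0,1\}$-weighting of $G$. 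A counting argument of the same type used to rule out $n_1>2$ in Lemma~\ref{lem:x,x+3tree} now gives $n_1=n_2\le 1$. If $n_1=n_2=0$ then $s'=s$, and $v$ is joined to $s$ bad trees and to the $m'$ trees of type $G_{v_i}(-)$ --- this is conclusion~(b). If $n_1=n_2=1$, write $G_1$ for the $n_1$-type and $G_2$ for the $n_2$-type component; then $s'=s-1$, so $v$ is joined to $G_1$, to $G_2$, to $s-1$ bad trees and to $m-2$ trees of type $G_{v_i}(-)$, and since the forced edges contribute exactly $s-1$, the vertex $v$ having forced colour $s$ forces exactly one of $e_1,e_2$ to have weight $1$ in every proper weighting. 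Running the usual ``weight the rest of $G$, extend, and invoke that $v$ is forced'' argument for each of the two choices of that edge, and once more with the weight of $v$ increased by $1$, pins down the forced colours of $v_1$ and $v_2$ and yields that $G_1$ is a $G_{v_1}(s-1,s+2)$-tree and $G_2$ is a $G_{v_2}(s,s+1)$-tree, which is conclusion~(a).

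The part I expect to be delicate is this last step: choosing the parity patterns for the various weightings of $G-G_1$ and of $G-G_2$ so that the extensions really are proper and so that one correctly reads off all four forced colours $s-1,\,s+2$ for $v_1$ and $s,\,s+1$ for $v_2$; the book-keeping here, together with the weight-swapping needed to exclude $n_1=n_2\ge 2$, is exactly of the kind carried out in the corresponding portion of the proof of Lemma~\ref{lem:x,x+3tree}. The parity count and the identification of the $m'$- and $s'$-type components, by contrast, are routine once that machinery is in hand.
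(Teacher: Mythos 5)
Your proposal is correct and follows exactly the route the paper intends: the paper in fact gives no proof of this lemma at all, merely remarking that it is obtained ``similarly to Lemma~\ref{lem:x,x+3tree}'', and your adaptation --- the parity count giving $n_2+s'=s$ and $n_1+s'+1=s+1$, hence $n_1=n_2$, the identification of the $m'$-type components as $G_{v_i}(-)$-trees and the $s'$-type components as bad trees, and the split $n_1=n_2=0$ versus $n_1=n_2=1$ yielding conclusions (b) and (a) respectively --- is the correct instantiation of that remark. The steps you flag as delicate (ruling out $n_1=n_2\ge 2$ and pinning down the forced colours of $v_1$ and $v_2$) are precisely the ones executed at the end of the proof of Lemma~\ref{lem:x,x+3tree}, and they transfer as you describe.
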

\begin{figure} [H]
\centering
\includegraphics[scale=0.8]{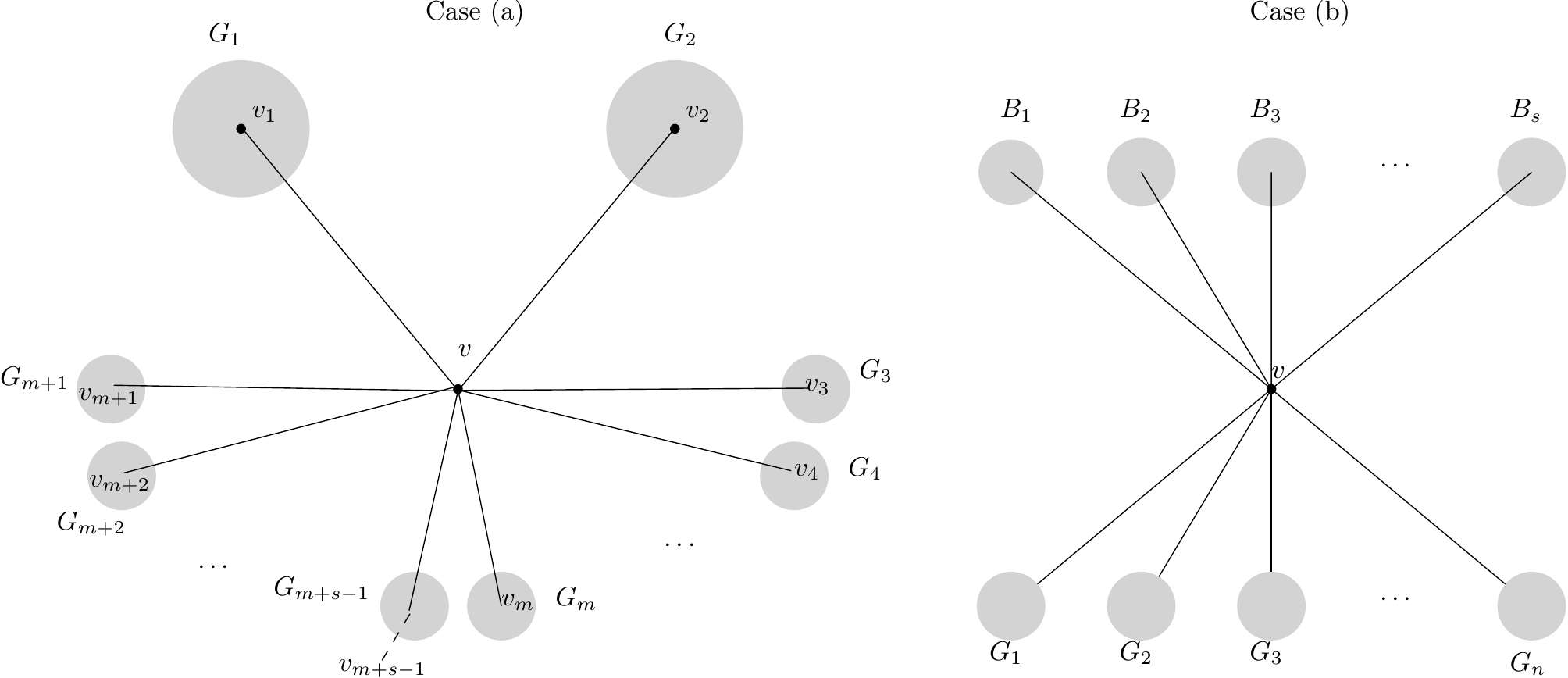}
\caption{An illustration of the two possible situations explained in Lemma~\ref{lem:x,x+1-tree}.}
\label{fig:a2atreelemma}
\end{figure}
\begin{lemma} \label{lem:allbadtrees}
Any bad tree distinct from $K_2$ is obtained from either: 
\begin{enumerate}[label=(\alph*)]
\item a $G_v(s,s+1)$-tree where $s>1$ by adding a vertex $v'$ joined to $v$ by an edge and to $s$ $K_2$-graphs by bridges, or
\item from two bad trees $G_1$ and $G_2$ by gluing together two edges $v_1'v_2'$ and $v_1''v_2''$ in $G_1$ and $G_2$ respectively where both $v_1'$ and $v_1''$ have degree 1 in $G_1$ and $G_2$ respectively and both $v_2'$ and $v_2''$ have degree $2$ in $G_1$ and $G_2$ respectively. 
\end{enumerate}
\end{lemma}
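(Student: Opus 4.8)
The plan is to take a bad tree $T \neq K_2$ and analyse the structure forced by Lemmas~\ref{lem: degree1vbridge} and~\ref{lem:degree1allbad}, then split into two cases according to whether some vertex of degree~$2$ is adjacent to a leaf. First I would pick a leaf $v'$ of $T$ (which exists since $T$ is a tree with $\geq 3$ vertices) and let $v$ be its unique neighbour. By Lemma~\ref{lem: degree1vbridge}, every edge at $v$ is a bridge — automatic in a tree — so Lemma~\ref{lem:degree1allbad} applies: writing $e_0 = v'v$ and $e_1,\dots,e_n$ for the other edges at $v$, with $G_i$ the component of $T - e_i$ avoiding $v'$ and $G_i'$ the tree obtained by reattaching $v',v$ via $e_0,e_i$, we get that each $G_i'$ is bad. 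This already tells us $T$ decomposes along $v$ into smaller bad trees, and I would now distinguish cases by the degree of $v$.

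If $v$ has a neighbour $u \neq v'$ that is itself a leaf, then repeat the argument with $u$ in place of $v'$; the components $G_i$ are now bad trees glued at $v$. If moreover $\deg(v) = 2$, so $v$'s only neighbours are the two leaves $v'$ and $u$, then $T = K_2$ with a pendant... — more carefully, the relevant configuration is that $v$ is adjacent to several leaves and the remaining components. I would show that the subtree $T'$ obtained by deleting all leaves at $v$ except one has $v$ forced to a weighted degree $s$ (respectively $s+1$ with an extra unit on $v$), i.e.\ $T'$ is a $G_v(s,s+1)$-tree, where $s$ is the number of deleted leaves; the point is that in any proper weighting of $T$ the $s$ edges to the deleted leaves must all carry weight~$0$ (else such a leaf collides with $v$ only if $v$ is also $0$, forcing case analysis), and running this back through Proposition-style reasoning yields that $T$ arises from the $G_v(s,s+1)$-tree $T'$ by the operation in (a); one needs $s > 1$ precisely because $s \leq 1$ forces $T'$ itself to be bad or $K_2$, which is handled separately or vacuously.

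If instead no vertex of degree $2$ is adjacent to a leaf — or more to the point, if there is no vertex playing the role above — then I would locate an edge $v_1'v_2'$ with $v_1'$ a leaf and $v_2'$ of degree exactly $2$: such an edge must exist somewhere along a longest path of $T$ unless $T$ falls into case (a), since the second-to-last vertex on a longest path either is a degree-$2$ vertex adjacent to the leaf end (giving this edge) or has higher degree with several leaves (giving case (a)). Cutting $T$ at the non-leaf edge incident to $v_2'$ splits $T$ into two pieces; I would add back a pendant $P_2$ to each stump at the cut so as to realise each piece as a bad tree $G_1, G_2$ with the prescribed leaf/degree-$2$ pattern, using Proposition~\ref{prop:-tree} and Lemma~\ref{lem:degree1allbad} to verify badness of both halves, so that $T$ is recovered by the gluing operation in (b). The main obstacle I expect is the bookkeeping in case (a): carefully showing that the forced weights on the pendant edges, together with the $G_v(-)$ and badness certificates of the remaining components supplied by Lemma~\ref{lem:degree1allbad}, combine to give exactly the $(s,s+1)$-behaviour at $v$ — in particular ruling out the degenerate small-$s$ configurations and making sure the split into (a) versus (b) is exhaustive — which is essentially a more delicate rerun of the parity/$f$-factor arguments used in Lemma~\ref{lem:x,x+1-tree}.
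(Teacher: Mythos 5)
Your overall plan (minimal counterexample, decompose at a leaf via Lemma~\ref{lem:degree1allbad}, then read off the construction from the end of a longest path) is the right one and is essentially the paper's, but both of your case reconstructions are aimed at the wrong local configurations, so as written the argument cannot produce operations (a) and (b). For (a): the operation attaches $s$ \emph{pendant paths of length two} to the new vertex $v'$ (the $K_2$'s joined to $v'$ by bridges), not $s$ leaves. In fact a vertex of a bad tree can never have two leaf neighbours: if it did, Lemma~\ref{lem:degree1allbad} would force the corresponding $G_i'$, which is a path on three vertices, to be bad, and $P_3$ has the proper weighting assigning $1$ to both edges. So the configuration you propose to detect (``$v$ adjacent to several leaves'') is vacuous in a bad tree, and deleting leaves at $v$ does not invert operation (a). The correct configuration sits one level deeper: after using Lemma~\ref{lem:degree1allbad} to reduce to the case where every leaf is adjacent to a degree-$2$ vertex, the paper takes the \emph{third}-last vertex $v'$ of a longest path; all neighbours of $v'$ away from the path are then degree-$2$ vertices each carrying exactly one pendant leaf, which is precisely the spider that operation (a) attaches, and the other component of $G-vv'$ (where $v$ is the fourth-last vertex) is the $G_v(s,s+1)$-tree.

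For (b): gluing two edges whose endpoints have degrees $1$ and $2$ produces an edge whose endpoints have degrees $1$ and $3$, i.e.\ a pendant edge at a degree-$3$ vertex. To invert it one must find a leaf $x$ whose neighbour $y$ has degree $3$ and take $G_1=G[\{x,y\}\cup C_1]$ and $G_2=G[\{x,y\}\cup C_2]$, where $C_1,C_2$ are the two components of $G-y$ other than $\{x\}$; Lemma~\ref{lem:degree1allbad} then certifies that both halves are bad. Your proposal instead cuts at the far edge of a degree-$2$ vertex adjacent to a leaf and grafts a pendant $P_2$ onto each stump; the two pieces so obtained cannot be glued along an edge to recover $T$ (the gluing would insert an extra vertex between $v_2'$ and its former neighbour, plus a spurious leaf). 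This mismatch also breaks your exhaustiveness argument, which is keyed to the second-last rather than the third-last vertex of a longest path; the real content of the lemma is exactly the claim that every bad tree of diameter at least $4$ exhibits one of the two specific configurations above, and that claim is not established by the proposal.
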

\begin{proof}
Suppose the lemma is false and look at a smallest counterexample $G$. It is easy to check that the statement holds for all bad trees of diameter at most 3. So we can assume that the diameter of $G$ is at least 4 and by Lemma \ref{lem:degree1allbad} we can also assume that all vertices of degree 1 are adjacent to vertices of degree 2. \\
We let $v$ be the fourth last vertex in a longest path in $G$ and let $v'$ be the third last vertex. Then the two subtrees obtained by removing the edge $vv'$ form the desired construction of $G$. 
\end{proof}

\noindent We list a recursive way to construct bad trees below in Figures~\ref{fig:consa3atrees},~\ref{fig:consminustrees},~\ref{fig:consa2atrees} and~\ref{fig:consbadtrees}. The class of bad trees which can be obtained in this way starting with $K_2$ as the smallest bad graph is denoted $\mathcal{B}$.

\begin{figure} [H]
\centering
\includegraphics[scale=0.8]{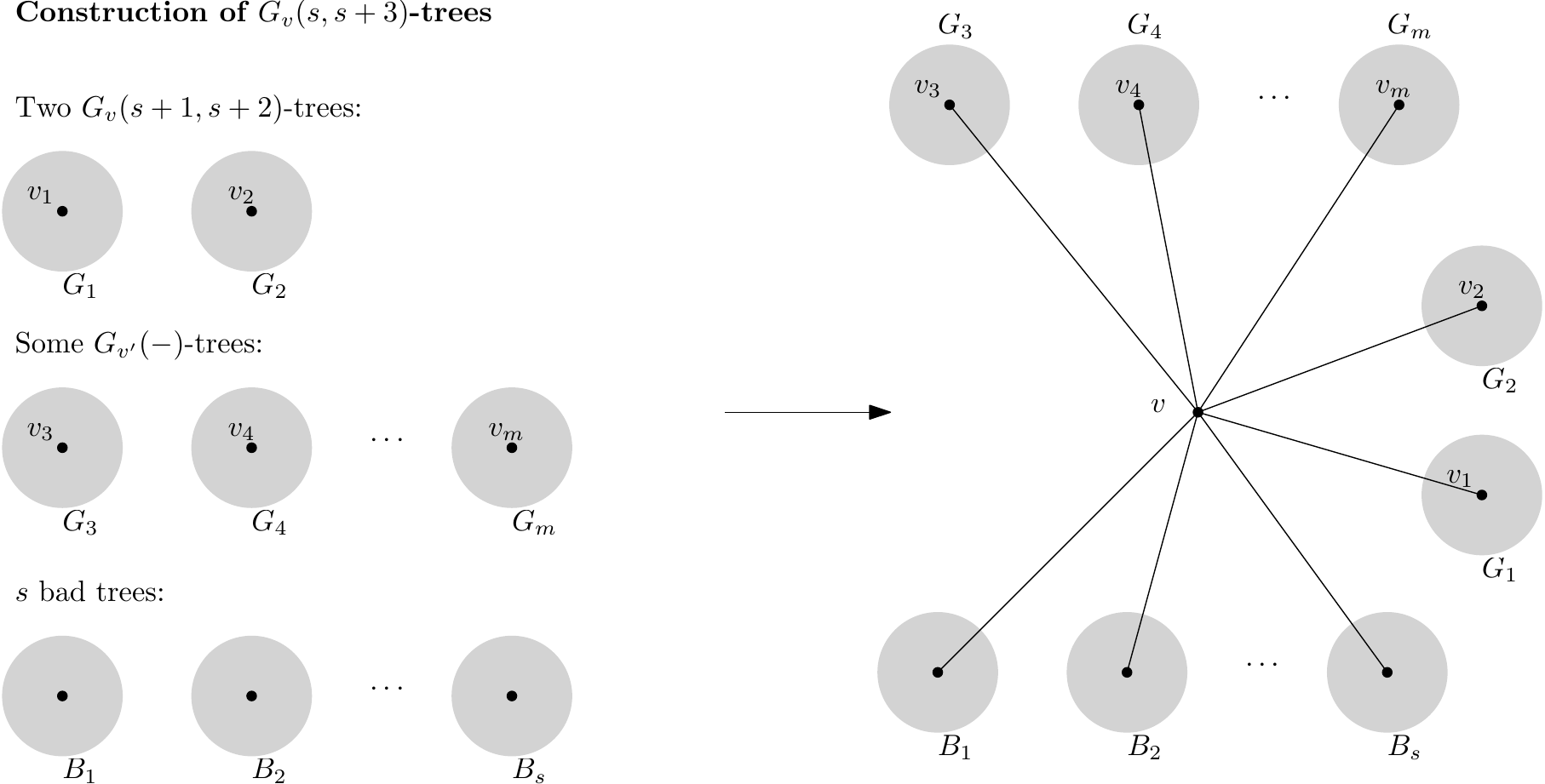}
\caption{This illustrates a recursive way how to construct all $G_v(s,s+3)$-trees.}
\label{fig:consa3atrees}
\end{figure}
\begin{figure} [H]
\centering
\includegraphics[scale=0.8]{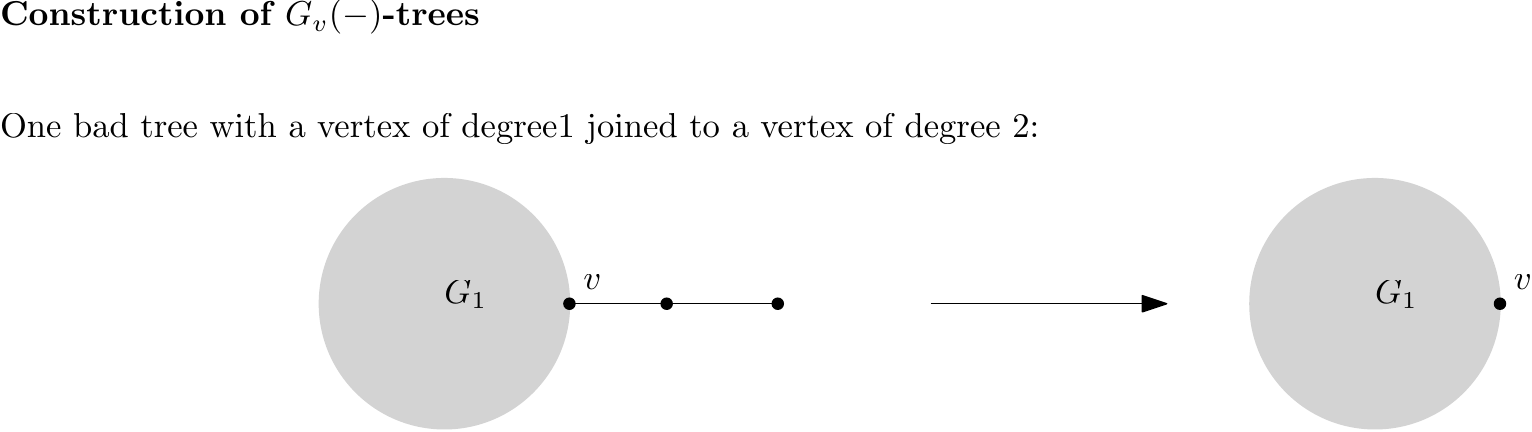}
\caption{This illustrates how to obtain all $G_v(-)$-trees from bad trees with 2 more vertices.}
\label{fig:consminustrees}
\end{figure}
\begin{figure} [H]
\centering
\includegraphics[scale=0.8]{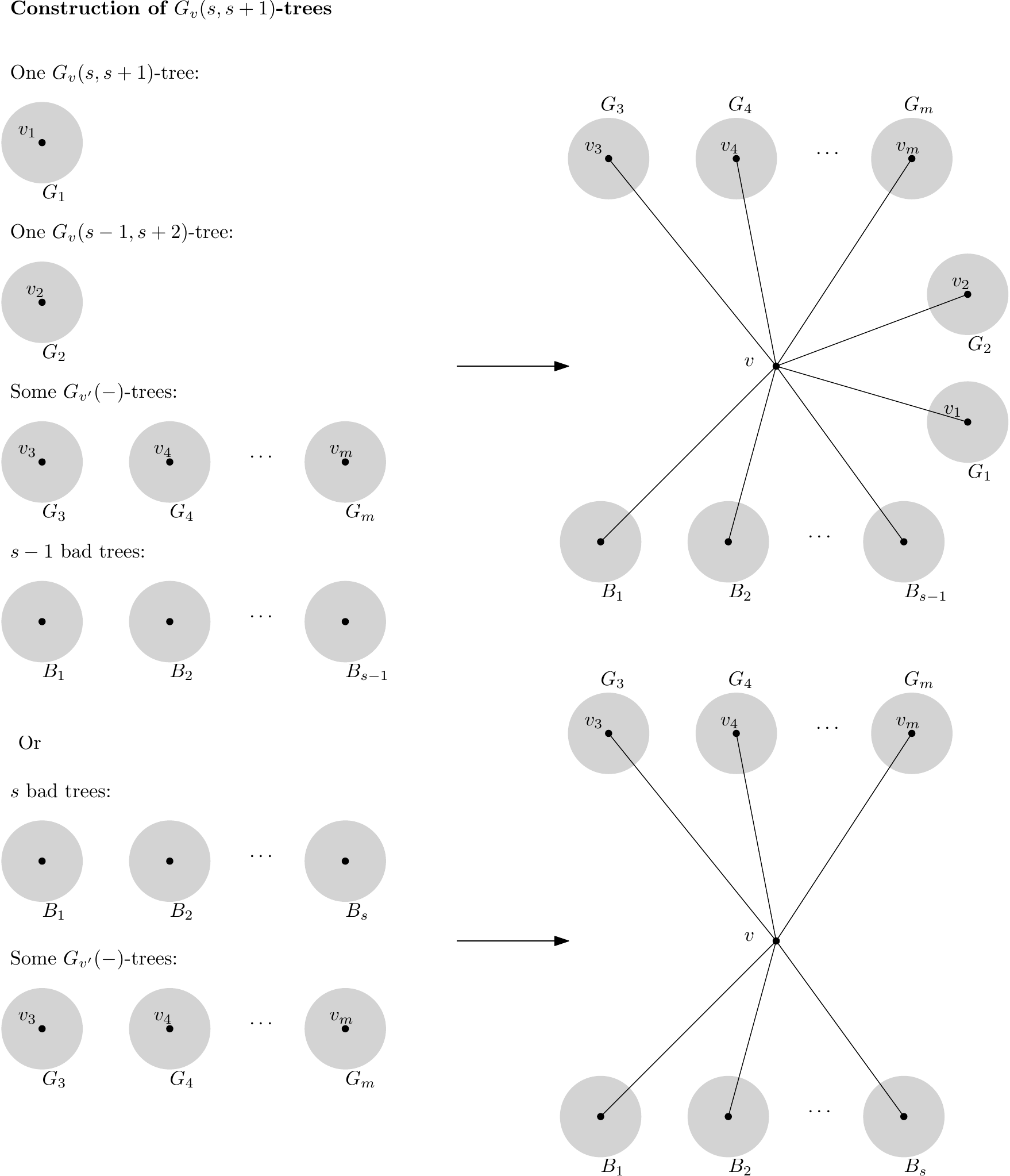}
\caption{This illustrates a recursive way how to construct all $G_v(s,s+1)$-trees.}
\label{fig:consa2atrees}
\end{figure}
\begin{figure} [H]
\centering
\includegraphics[scale=0.8]{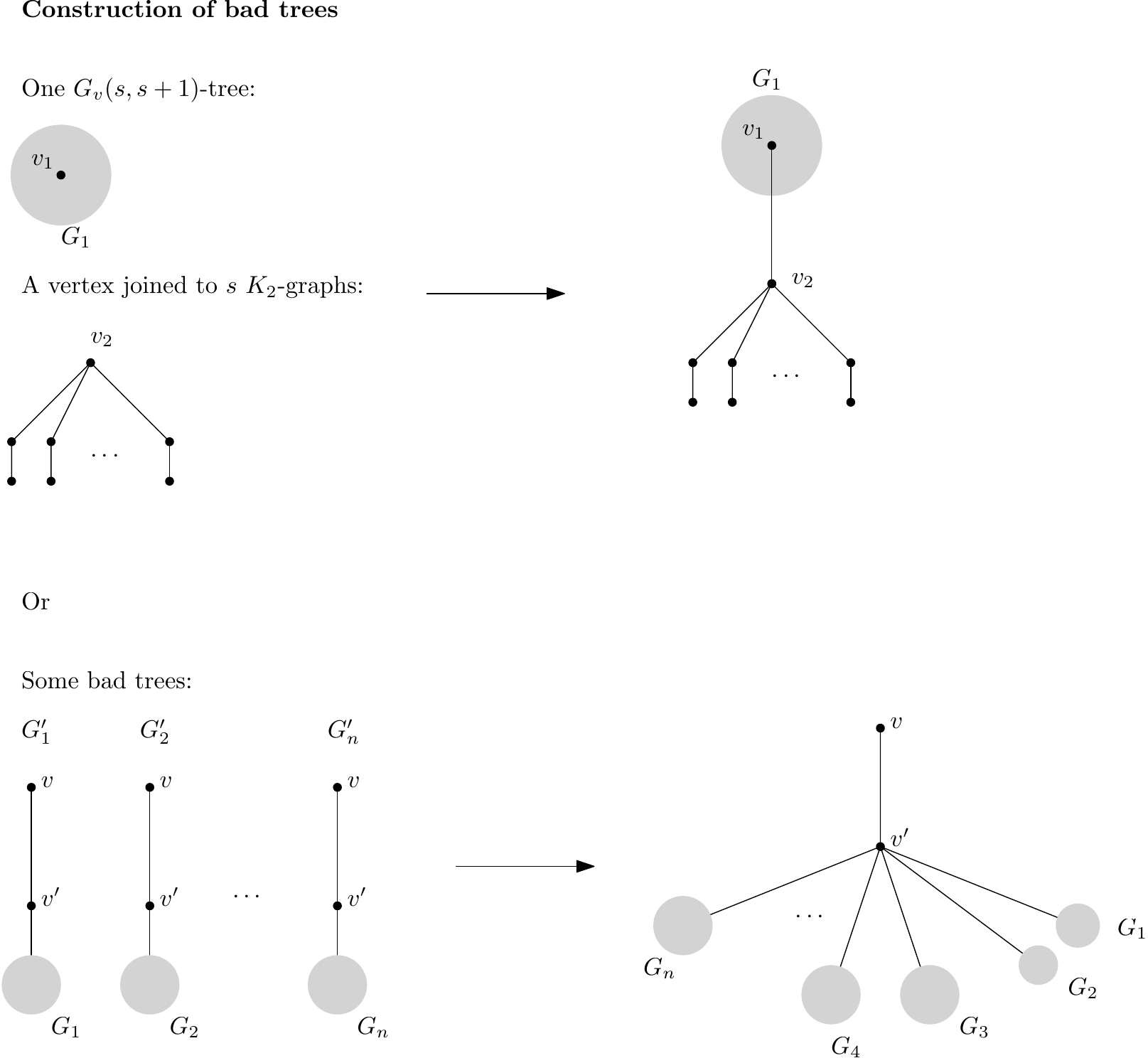}
\caption{This illustrates a recursive way how to construct all bad trees.}
\label{fig:consbadtrees}
\end{figure}
\noindent The above constructions do indeed describe all bad trees:

\begin{proof} [of Theorem~\ref{Thm:2}]
Suppose the theorem is false and let $G$ be a smallest bad tree which cannot be constructed by the above recursion.
It is easy to check that the diameter of $G$ must be at least 4. Let $n$ be the number of vertices in $G$. By Proposition~\ref{prop:-tree} and Lemmas~\ref{lem:x,x+3tree} and~\ref{lem:x,x+1-tree} we can assume that all trees of type $G_v(-)$ with at at most $n-4$ vertices and all trees of type $G_v(s,s+1)$ and $G_v(s,s+3)$ with at most $n-3$ vertices can be constructed using the above recursion. Furthermore, since $G$ is a smallest counterexample all bad trees with fewer vertices can also be constructed by the recursion. Lemma \ref{lem:degree1allbad} implies that $G$ cannot have a vertex of degree 1 which is adjacent to a vertex of degree at least 3. So by Lemma \ref{lem:allbadtrees} our counterexample $G$ is obtained from a $G_v(s,s+1)$-tree $G'$ where $s>0$ and a vertex joined to $s$ $K_2$-graphs by bridges. But $G'$ has at most $n-3$ vertices so $G'$ can be constructed by the recursion, and then so can $G$, a contradiction.
\end{proof}
\section{Concluding remarks}
We have provided a characterisation of all bridgeless bipartite graphs without the \{0,1\}-property and all trees without the \{0,1\}-property. Actually, since the \{0,1\}-property is equivalent to the $\{0,a\}$-property for any non-zero integer $a$ these characterisations extend to the $\{0,a\}$-property. The characterisations also provide polynomial time algorithms checking the $\{0,a\}$-property. This, together with Theorem \ref{thm:12oddmul} from \cite{TWZ}, answers Problem 1 in \cite{Lu} except for bipartite graphs with bridges. So it remains to characterise all the bipartite graphs with bridges and without the \{0,1\}-property. It would be interesting to investigate whether the methods used in Section 3 can be extended to characterise all bipartite graphs without the \{0,1\}-property.
\acknowledgements
\label{sec:ack}
The author would like to thank Carsten Thomassen for advice and helpful discussions, as well as Thomas Perret for careful reading of the manuscript.

\nocite{*}

\bibliographystyle{abbrvnat}

\bibliography{bibilibob}
\label{sec:biblio}

\end{document}